\newcommand{\grp}[6]
{\quad\diagram
\bullet
\xline[#1]
\xline[#2]
 \xline[#3]
& 
\bullet
\xline[#4]
\xline[#5]
\\
\bullet
\xline[#6]
 & 
\bullet
\enddiagram\quad}
\newcommand{\bfs}{\mathbf s}
\newcommand{\bft}{\mathbf t}
\newcommand{\bbN}{{\mathbb N}}
\newcommand{\bbA}{{\mathbb A}}
\newcommand{\bbC}{\mathbb C}
\newcommand{\bbT}{\mathbb T}
\newcommand{\cZ}{{\mathcal Z}}
\newcommand{\rs}{\restriction}
\newcommand{\cB}{\mathcal B}
\newcommand{\e}{\varepsilon}
\newtheorem{thm}{Theorem}[section]
\newtheorem{theorem}{Theorem}
\newtheorem{corollary}[thm]{Corollary}
\newtheorem{coro}[theorem]{Corollary}
\newtheorem{conjecture}[thm]{Conjecture}
\newtheorem{question}[thm]{Question}
\newtheorem{lemma}[thm]{Lemma}
\theoremstyle{definition}
\newtheorem{definition}[thm]{Definition}
\newcounter{my_enumerate_counter}
\newcommand{\pushcounter}{\setcounter{my_enumerate_counter}{\value{enumi}}}
\newcommand{\popcounter}{\setcounter{enumi}{\value{my_enumerate_counter}}}
\def\rs{\restriction}
\newcommand{\FileName}[1]{\thanks{Filename: {\tt #1}}}
\newcommand{\dirlim}{\underrightarrow{\lim}}
\title{Graphs and  CCR algebras}
\author{Ilijas Farah}
\address{Department of Mathematics and Statistics\\
York University\\
4700 Keele Street\\
North York, Ontario\\ Canada, M3J 1P3\\
and Matematicki Institut, Kneza Mihaila 34, Belgrade, Serbia}
\urladdr{http://www.math.yorku.ca/$\sim$ifarah}
\email{ifarah@mathstat.yorku.ca}
\subjclass{46L05, 05C90}
\thanks{Partially supported by NSERC}
\date{\today.}
\begin{document}
\begin{abstract} 
I introduce yet another way to associate a C*-algebra to a graph
and construct a simple nuclear C*-algebra that has irreducible representations both on 
 a separable and a nonseparable Hilbert space. 
\end{abstract} 

\maketitle

Kishimoto, Ozawa and Sakai have proved in \cite{KiOzSa} that the pure
state space of every separable simple C*-algebra is
\emph{homogeneous} in the sense that for every two pure states $\phi$
and $\psi$ there is an automorphism $\alpha$ such that $\phi\circ
\alpha=\psi$. They have shown that this fails for nonseparable
algebras and asked whether the pure state space of every nuclear (not
necessarily separable) C*-algebra is homogeneous.

\begin{theorem} \label{T3} There is a simple nuclear C*-algebra $B$ 
that has  irreducible representations both on a separable Hilbert space and
on a nonseparable Hilbert space. 
\end{theorem}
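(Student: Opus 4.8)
The algebra $B$ will be $\mathcal A_G$ for a carefully chosen graph $G$ on $\omega_1$, where $\mathcal A_G$ is the universal C*-algebra generated by self-adjoint unitaries $\{u_v : v\in V(G)\}$ subject to $u_v u_w=(-1)^{e(v,w)}u_w u_v$, with $e(v,w)=1$ exactly when $\{v,w\}$ is an edge. (Note $B$ is forced to be nonseparable: a separable C*-algebra has only separable GNS representations.) Two of the three required properties are structural and hold for a broad class of graphs. For nuclearity: if $F\subseteq V(G)$ is finite then $\mathcal A_{G\restriction F}$ is a quotient of the group C*-algebra of a finite extraspecial-type $2$-group, hence finite-dimensional, and $\mathcal A_G=\varinjlim_F \mathcal A_{G\restriction F}$ with injective connecting maps, so $\mathcal A_G$ is approximately finite-dimensional and in particular nuclear. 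For simplicity I would isolate a purely combinatorial condition $(\star)$ on $G$ — no nonempty finite $F\subseteq V(G)$ is ``even'', in the sense that $|N_G(w)\cap F|$ is even for every $w\in V(G)$ — and show that under $(\star)$ no finite stage leaves a surviving central projection, so that $\mathcal A_G$ has trivial centre and, being a direct limit of finite-dimensional algebras, is simple.

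The work is then to build a graph $G$ on $\omega_1$ satisfying $(\star)$ for which $\mathcal A_G$ admits an irreducible representation on a separable space and one on a nonseparable space; since $\mathcal A_G$ is simple, both are automatically faithful. For the nonseparable representation I would build into $G$ an uncountable edgeless set $W$, generically coupled to a countable auxiliary set $T$ of further vertices — every finite partial adjacency pattern between $W$ and $T$ realized by infinitely many members of $T$, an almost-disjoint or Luzin-type configuration arranged so as not to violate $(\star)$. Then $C^*(u_w:w\in W)\cong C(\{-1,1\}^W)$, and I would show $\mathcal A_G$ carries a pure state $\psi$ whose restriction to this abelian subalgebra is the Haar state; the GNS space of $\psi$ then contains $L^2(\{-1,1\}^W,\mathrm{Haar})$ and is nonseparable, while $\pi_\psi$ is irreducible because $\psi$ is pure. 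Existence of $\psi$ would be obtained by extending the Haar state through $\mathcal A_G$, using the genericity of the $W$–$T$ coupling to keep the relevant relative commutants trivial, so that a pure extension with nonseparable GNS can be extracted.

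For the separable representation I would build $\mathcal A_G$, in tandem with $G$, as an increasing union $\bigcup_{\alpha<\omega_1}\mathcal A_{G_\alpha}$ of separable subalgebras and recursively construct a coherent chain of faithful irreducible representations $\pi_\alpha:\mathcal A_{G_\alpha}\to B(\ell^2)$ with $\pi_\beta\restriction\mathcal A_{G_\alpha}=\pi_\alpha$ for $\alpha<\beta$ and with $\bigcup_\alpha\pi_\alpha$ still irreducible; the union is then an irreducible representation of $\mathcal A_G$ on the separable space $\ell^2$. The delicate point at each successor step is to add the new vertices with an adjacency pattern whose induced ``commutation character'' over the previous stage is (i) not realized by any element of $\pi_\alpha(\mathcal A_{G_\alpha})$ — so the representation keeps growing and stays faithful on the genuinely nonseparable $\mathcal A_G$ — yet (ii) such that the corresponding automorphism of $\mathcal A_{G_\alpha}$ (of Bogoliubov type) is unitarily implemented in $\pi_\alpha$, so that a symmetry on $\ell^2$ realizing the new generator actually exists. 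A bookkeeping argument interleaves these successor steps with the requirements forcing irreducibility of the union and with the requirements of $(\star)$ and of the $W$–$T$ configuration.

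The main obstacle is precisely the simultaneous fulfilment of these demands. Any cheap way to manufacture a nonseparable irreducible representation — embedding an uncountable Clifford algebra $\mathbb C\ell_{\omega_1}$ (an uncountable clique) or $\bigotimes_{\omega_1}M_2$ (an uncountable matching) into $\mathcal A_G$ — simultaneously destroys all separable representations, because such subalgebras have no faithful representation on a separable Hilbert space and faithfulness is inherited by restriction. Hence the nonseparability must be carried ``softly'', by a pure state of an abelian subalgebra rather than by any non-type-I subalgebra, while the local complexity of $G$ must stay low enough that the coherent chain of representations on $\ell^2$ survives all $\omega_1$ stages. Threading the transfinite recursion so that the ``add a generic, non-inner character'' requirement never conflicts with the ``preserve implementability on $\ell^2$'' requirement, while maintaining $(\star)$ throughout, is where I expect essentially all of the difficulty to lie.
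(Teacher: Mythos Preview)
Your framework --- CCR algebras $\mathcal A_G$ attached to graphs, nuclearity via the AF structure, and simplicity via the condition $(\star)$ --- matches the paper exactly; $(\star)$ is precisely condition (4) of the paper's Lemma~\ref{L.characterization}. Where you diverge sharply is in producing the two representations, and here your plan is both far more complicated than necessary and genuinely incomplete.

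The paper avoids transfinite recursion entirely. It takes a \emph{bipartite} graph $G(Y,\mathbb A)$ with vertex set $Y\sqcup\mathbb A$, where $\mathbb A\subseteq 2^Y$ and $i\in Y$ is adjacent to $x\in\mathbb A$ iff $i\in x$. A single concrete lemma (Lemma~\ref{L.B}) represents $B(G(Y,\mathbb A))$ irreducibly on $\bigotimes_{i\in Y}(H_i,\zeta_i)$, a space of density character $|Y|$, provided $\mathbb A$ is dense in $2^Y$. The key observation is the \emph{duality}: $G(Y,\mathbb A)\cong G(\mathbb A,\hat{\mathbb A})$, where $\hat{\mathbb A}\subseteq 2^{\mathbb A}$ is the dual family, and $\hat{\mathbb A}$ is dense in $2^{\mathbb A}$ exactly when $\mathbb A$ is independent (Lemma~\ref{L.hat}). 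So if one chooses $Y=\mathbb N$ and $\mathbb A\subseteq 2^{\mathbb N}$ an independent, dense family of size $2^{\aleph_0}$ (Fichtenholz--Kantorovich plus finite modifications), the \emph{same} lemma, applied once with $Y=\mathbb N$ and once with $Y=\mathbb A$, yields irreducible representations on spaces of density $\aleph_0$ and $2^{\aleph_0}$ respectively. No recursion, no bookkeeping, no pure-state extensions.

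Your proposal, by contrast, leaves the central difficulty unresolved: you say yourself that threading the $\omega_1$-recursion so that ``non-inner character'' and ``implementable on $\ell^2$'' never conflict is where all the work lies, and you give no mechanism for achieving this. There is a real tension: requiring each new Bogoliubov automorphism to be implemented in the fixed representation $\pi_\alpha$ on $\ell^2$ is a strong constraint on the adjacency pattern of the new vertex, and you have not argued that enough freedom remains to also realize the $W$--$T$ configuration and maintain $(\star)$. Separately, your nonseparable-side argument needs more: extreme points of the set of extensions of the Haar state need not be pure states of $\mathcal A_G$, so ``a pure extension with nonseparable GNS can be extracted'' requires justification you have not supplied. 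The paper's duality trick sidesteps all of this.
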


\begin{coro} 
There is a simple nuclear algebra whose pure state space 
is not homogeneous. This algebra moreover has a faithful representation 
on  a separable Hilbert space. \qed
\end{coro}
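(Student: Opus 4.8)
The plan is to read the Corollary directly off Theorem~\ref{T3}. Let $B$ be the algebra it provides, with an irreducible representation $\pi_\bfs$ on a separable Hilbert space $H_\bfs$ and an irreducible representation $\pi_\bft$ on a nonseparable Hilbert space $H_\bft$. Since $B$ is literally this algebra, it is still simple and nuclear, so it only remains to exhibit two pure states in different $\Aut(B)$-orbits and to check the assertion about separable representations. Fix unit vectors $\xi\in H_\bfs$ and $\eta\in H_\bft$ and put $\phi=\langle\pi_\bfs(\cdot)\xi,\xi\rangle$ and $\psi=\langle\pi_\bft(\cdot)\eta,\eta\rangle$. A nonzero vector is cyclic for any irreducible representation, so these are pure states whose GNS representations are unitarily equivalent to $\pi_\bfs$ and $\pi_\bft$ respectively.

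Next I would rule out an automorphism carrying one to the other. If $\alpha\in\Aut(B)$ satisfied $\phi\circ\alpha=\psi$, then the GNS representation of $\psi$ would be unitarily equivalent to $\pi_\phi\circ\alpha$, which acts on the same (separable) Hilbert space as $\pi_\phi$, hence as $\pi_\bfs$. But unitary equivalence preserves the dimension of the underlying Hilbert space, and $\pi_\psi$ is equivalent to the representation $\pi_\bft$ on a nonseparable space; this is a contradiction. Therefore $\phi$ and $\psi$ lie in distinct orbits and the pure state space of $B$ is not homogeneous.

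For the last sentence: $B$ is simple and $\pi_\bfs$ is a nonzero representation, so $\ker\pi_\bfs$ is a proper closed two-sided ideal and hence $\{0\}$, i.e.\ $\pi_\bfs$ is a faithful representation of $B$ on the separable Hilbert space $H_\bfs$. I do not anticipate any real obstacle here: all of the difficulty is concentrated in Theorem~\ref{T3}, and the only inputs are the GNS correspondence between pure states and classes of irreducible representations, invariance of Hilbert-space dimension under unitary equivalence, and the definition of simplicity.
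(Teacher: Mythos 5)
Your argument is correct and is exactly the standard deduction the paper leaves implicit (the Corollary is stated with a \qed and no proof): vector states of the two irreducible representations are pure, the density character of the GNS Hilbert space is an automorphism-orbit invariant, and simplicity makes the separable irreducible representation faithful. Nothing to add.
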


As a curious side result, our construction  gives a non-obvious equivalence relation  
on the class of all graphs. 
For example, among the graphs with four vertices there are three equivalence classes: 
\spreaddiagramcolumns{-1pc}
\spreaddiagramrows{-1pc}
\begin{equation*}
\grp dr{}d{}r
\grp dr{}{dl}dr 
\grp d{}{}{dl}{}r 
\grp d{dr}{}{}{}r 
\grp d{}{}{}{}{}
\grp dr{}{}{}{}\tag {1}
\end{equation*}
\begin{equation*}
\grp d{}{}d{}{}\quad
\grp d{}{}{dl}d{}\quad
\grp d{}{}{dl}dr\quad
\grp dr{dr}{dl}dr \tag{2}
\end{equation*}
and the third one containing the null graph. 
I don't know whether there is a simple description of this  relation 
or what is its computational complexity (see Question~\ref{Q.Complexity}).

In \S\ref{S1}  we prove Theorem~\ref{T3} and in 
\S\ref{S2} we study some properties of the canonical commutation relation (CCR) algebras 
associated with graphs of which the algebra used in the proof of Theorem~\ref{T3} is a special case. 
By $|X|$ we denote the cardinality of the set $X$. 
All C*-algebras considered in this paper will be nuclear and therefore 
the notation $A\otimes B$ will always be unambiguous. 
We also use the following convention. 
If $A$ and $B$ are unital algebras then $A\otimes B$ is identified with  a subalgebra of $B$. 
Similarly, if $A_i$, for $i\in X$, are unital algebras and $Y\subseteq X$ then
$\bigotimes_{i\in Y} A_i$ is considered as a subalgebra of $\bigotimes_{i\in X} A_i$. 
Note that under our assumptions this makes sense for arbitrary sets $X$ and $Y$. 
 All the background can be found in \cite{Black:Operator} and~\cite{We:Set}.

\section{Graphs and algebras}

\label{S1}

Given a graph $G=(V,E)$ let $B(G)$ be the universal algebra generated by unitaries 
$u_x$, for $x\in V$ that satisfy relations
\begin{align*} 
u_x u_x^*&=1 &\text{ for all $x$,}\\
u_x^2&=1&\text{ for all $x$,}\\
u_x u_y&=u_y u_x& \text{ if $x$  and $y$ are not adjacent,}\\
u_x u_y&=-u_y u_x& \text{ if $x$  and $y$ are  adjacent.}
\end{align*}
Recall that the \emph{character density} of a $C^*$ algebra is the minimal cardinality 
of its  dense subset. 

\begin{lemma}\label{L.well-defined} The algebra $B(G)$ is well-defined for every graph $G$,
and its character density is equal to $|G|+\aleph_0$. 
\end{lemma}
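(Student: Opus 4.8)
The plan is to verify that the defining relations are \emph{admissible}, so that the universal $C^*$-algebra $B(G)$ is well-defined in the sense of \cite{Black:Operator}, then to exhibit a concrete model of $B(G)$, and finally to read off the density character by normal-ordering words in the generators. In any $*$-representation the relations $u_xu_x^*=1$ and $u_x^2=1$ force $u_x^*=u_x$, so each $u_x$ is a self-adjoint unitary with $\|u_x\|=1$; thus the norms of the generators are uniformly bounded, the relations are admissible, and $B(G)$ is well-defined. For a model, fix a linear order on $V$, put $\Gamma=\bigoplus_{x\in V}\bbZtwo$ with standard basis $(e_x)_{x\in V}$, and let $\sigma\colon\Gamma\times\Gamma\to\{\pm1\}$ be the $2$-cocycle $\sigma\big(\sum_{x\in F}e_x,\sum_{y\in H}e_y\big)=(-1)^{|\{(x,y)\in F\times H\,:\,x>y,\ \{x,y\}\in E\}|}$, whose exponent is a $\bbZtwo$-valued bilinear form and whose antisymmetrization $\sigma(e_x,e_y)\sigma(e_y,e_x)$ equals $-1$ exactly when $\{x,y\}\in E$. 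In the left regular $\sigma$-representation on $\ell^2(\Gamma)$ the canonical unitaries $\lambda_\sigma(e_x)$ are pairwise distinct self-adjoint unitaries satisfying all the relations defining $B(G)$, so there is a surjective $*$-homomorphism $\pi\colon B(G)\to C^*(\Gamma,\sigma)$ with $\pi(u_x)=\lambda_\sigma(e_x)$; in particular $B(G)\neq0$. (A concrete Jordan--Wigner-type representation on $\ell^2([V]^{<\omega})$ would do equally well.)

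For the upper bound on the density character, note that since each $u_x$ is a self-adjoint unitary and any two of the generators commute up to sign, iterating the relations rewrites every word in the $u_x$ as $\pm u_{x_1}\cdots u_{x_k}$ with $x_1<\cdots<x_k$. Writing $u_F=\prod_{x\in F}u_x$ (increasing product) for $F\in[V]^{<\omega}$, we have $u_Fu_H=\pm u_{F\triangle H}$ and $u_F^*=\pm u_F$, so the linear span of $\{u_F:F\in[V]^{<\omega}\}$ is a dense $*$-subalgebra of $B(G)$. Its $(\bbQ+i\bbQ)$-linear combinations form a dense subset of cardinality $|[V]^{<\omega}|+\aleph_0=|G|+\aleph_0$, so the density character of $B(G)$ is at most $|G|+\aleph_0$.

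For the lower bound, if $G$ is finite then $B(G)$ is a nonzero $C^*$-algebra and hence an infinite metric space, so its density character is at least $\aleph_0=|G|+\aleph_0$. If $G$ is infinite it suffices to exhibit $|G|$ pairwise $\sqrt2$-separated elements. Working inside the model $C^*(\Gamma,\sigma)$: for $x\neq y$ the unitary $v=\lambda_\sigma(e_y)\lambda_\sigma(e_x)$ is self-adjoint and $\neq1$ when $x,y$ are non-adjacent (using distinctness of the $\lambda_\sigma(e_z)$) and satisfies $v^2=-1$ when $x,y$ are adjacent, so in either case $\|v-1\|\geq\sqrt2$; multiplying by the unitary $\lambda_\sigma(e_y)$ gives $\|\lambda_\sigma(e_x)-\lambda_\sigma(e_y)\|\geq\sqrt2$, and since $\pi$ is contractive the same lower bound holds for $\|u_x-u_y\|$ in $B(G)$. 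Thus $\{u_x:x\in V\}$ is $\sqrt2$-separated of cardinality $|G|$, so the density character is at least $|G|$; combined with the previous paragraph, it equals $|G|+\aleph_0$.

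The one step calling for genuine care is constructing the model: one has to pin down the cocycle $\sigma$ (equivalently, the Jordan--Wigner representation) and check that its antisymmetrization reproduces exactly the adjacency relation of $G$, so that the $\lambda_\sigma(e_x)$ really satisfy the $B(G)$-relations and are pairwise distinct. Once such a representation is in hand, the normal-ordering computation for the upper bound and the metric-separation computation for the lower bound are entirely routine.
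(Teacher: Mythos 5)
Your proof is correct, and while it follows the same overall skeleton as the paper's (exhibit a nonzero model to see the universal algebra is well-defined; count a dense set for the upper bound on density; show the generators are $\sqrt2$-separated for the lower bound), both technical pillars are replaced by genuinely different arguments. For the model, the paper builds an explicit representation of $B(G_0)$ for each \emph{finite} subgraph $G_0$ on a tensor product $\bigotimes_{i<j}H_{ij}$ of two-dimensional spaces, assigning to each generator a tensor product of the matrices $\IDENT$, $\Flipw$, $\Flip$, and then realizes $B(G)$ for infinite $G$ as a direct limit; you instead produce a single model for arbitrary $G$ at once as a twisted group C*-algebra $C^*(\Gamma,\sigma)$ with $\Gamma=\bigoplus_V\bbZtwo$ and $\sigma$ the exponential of the "upper-triangular" bilinear form of the adjacency relation, which is cleaner and incidentally anticipates the Slawny/CCR point of view discussed in the paper's concluding remarks. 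For the separation estimate, the paper routes through its Lemma~\ref{L.sqrt2} (a spatial tensor product norm inequality) together with the observation that the three $2\times2$ matrices above are pairwise $\sqrt2$ apart; you avoid that lemma entirely by a direct spectral argument: $v=u_yu_x$ is a unitary with $v^2=-1$ (spectrum in $\{\pm i\}$) in the adjacent case and a self-adjoint unitary $\neq 1$ in the non-adjacent case, so $\|u_x-u_y\|=\|v-1\|\geq\sqrt2$ in the model, hence in $B(G)$ by contractivity of the quotient map. Your normal-ordering argument for the upper bound is also slightly more explicit than the paper's one-line appeal to the generators forming a generating set of size $|G|$. All steps check out: the cocycle identity for $\sigma$ follows from bilinearity of its exponent over ${\mathbb F}_2$, its antisymmetrization reproduces adjacency exactly, and $\sigma(e_x,e_x)=1$ gives self-adjointness and $\lambda_\sigma(e_x)^2=1$.
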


\begin{proof} We first show that for every finite graph 
$G$ there is a C*-algebra generated by the unitaries $u_x$, for $x\in V$, satisfying the 
required relations. 

Let $n=|V|$ and let $m=\binom n 2$, identified with the set of distinct pairs $\{i,j\}$ 
of natural numbers in $\{1,\dots, n\}$. For each pair  $1\leq i<j\leq n$ fix a two-dimensional 
complex Hilbert space $H_{ij}$ and let $H=\bigotimes_{1\leq i<j\leq n} H_{i,j}$. 

For $k\leq n$ define the unitary $u_k$ on $H$ as
\[
u_k=\bigotimes_{1\leq i<j\leq n} u_{i,j,k}
\]
where 
\[
u_{i,j,k}=
\begin{cases} 
\begin{pmatrix} 1 & 0 \\ 0 & 1 \end{pmatrix} & 
\text{ if $k\notin \{i,j\}$ or $i$ is not adjacent to $j$}\\
\begin{pmatrix} 1 & 0 \\ 0 & -1 \end{pmatrix} & 
\text{ if $k=i$ and $i$ is adjacent to $j$, and}\\
\begin{pmatrix} 0 & 1 \\ 1 & 0 \end{pmatrix} & 
\text{ if $k=j$ and $i$ is adjacent to $j$.}
\end{cases} 
\]
Then each $u_k$ is a self-adjoint unitary and 
clearly $u_i$ and $u_j$ commute if~$i$ is not adjancent to $j$ and $u_i$ and $u_j$ anti-commute if 
 $i$ is adjacent to $j$. 
 Therefore $C^*(\{u_i\colon i\leq n\})$ realizes the defining relations for $B(G)$. 
If $G$ is infinite, then clearly $B(G)$ is the direct limit of $B(G_0)$ where $G_0$ ranges
over all finite subgraphs of $G$. 
Therefore for every $G$ there is a C*-algebra that realizes the defining relations for $G$. 

Since all the generators of $B(G)$ are unitaries, by taking the direct sum of all representations
one obtains $B(G)$ for a finite $G$. 

We claim that $x\neq y$ implies $\|u_x-u_y\|\geq \sqrt 2$. Since the matrices
$\begin{pmatrix} 1 & 0 \\ 0 & 1 \end{pmatrix}$, $\begin{pmatrix} 1 & 0 \\ 0 & -1 \end{pmatrix}$ and
$\begin{pmatrix} 0 & 1 \\ 1 & 0 \end{pmatrix}$ are $\sqrt 2$ apart from each other, this will 
follow from Lemma~\ref{L.sqrt2}. 
Since the generating unitaries $u_i$, for $i\in V$, form a discrete generating set, 
the character density of $B(G)$ is $|G|$ if $G$ is infinite and $\aleph_0$ if $G$ is finite.  
\end{proof} 

The following lemma is probably  
well-known but I could not find a reference (here 
$\bbT$ denotes the unit circle in $\bbC$). 

\begin{lemma} \label{L.sqrt2}  In any spatial tensor product 
of C*-algebras $C\otimes D$ the following holds.  
If $v$ and $w$ are unitaries in $D$ and $a$ and $b$ are in $C$ then 
\[
\|a\otimes v -b\otimes w\|\geq \inf_{\lambda \in \bbT} \|\lambda a-b\|.
\]
\end{lemma}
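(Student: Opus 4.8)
The plan is to reduce the statement to a computation in $C \otimes D$ where the $D$-side is understood concretely. First I would use the fact that any state on $D$ extends to a state on $C \otimes D$, and more usefully, that for the spatial (minimal) tensor product the norm of an element $\sum_i c_i \otimes d_i$ can be computed by testing against product states or, better, against representations of the form $\pi \otimes \rho$. So fix a faithful representation $\rho$ of $D$ on a Hilbert space $K$. Since $v$ and $w$ are unitaries, after compressing to a suitable cyclic subspace I want to find a unit vector $\xi \in K$ on which $v$ and $w$ act by scalars, or at least approximately so — this is where the $\inf_{\lambda \in \bbT}$ enters: $\rho(w^*v)$ is a unitary, so its spectrum is a closed subset of $\bbT$, and for any $\mu$ in that spectrum there are (approximate) eigenvectors $\xi$ with $\rho(w^*v)\xi \approx \mu \xi$, i.e. $\rho(v)\xi \approx \mu\,\rho(w)\xi$.

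With such a $\xi$ in hand, I would estimate $\|a \otimes v - b \otimes w\|$ from below by evaluating the operator $(\pi \otimes \rho)(a \otimes v - b \otimes w)$ on vectors of the form $\eta \otimes \xi$ for $\eta$ ranging over the Hilbert space of $\pi$, where $\pi$ is an arbitrary representation of $C$. On such vectors the operator acts as $(\pi(a)\eta)\otimes(\rho(v)\xi) - (\pi(b)\eta)\otimes(\rho(w)\xi) \approx \big(\pi(\mu a - b)\eta\big)\otimes \rho(w)\xi$, using $\rho(v)\xi \approx \mu\,\rho(w)\xi$ and that $\rho(w)\xi$ is a unit vector. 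Taking the supremum over $\eta$ and over faithful $\pi$ gives $\|a\otimes v - b\otimes w\| \gtrsim \|\mu a - b\| \geq \inf_{\lambda\in\bbT}\|\lambda a - b\|$. Since the approximation can be made arbitrarily tight (choosing $\xi$ in a small spectral subspace of $\rho(w^*v)$ around $\mu$), the strict inequality becomes the desired $\geq$.

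The main obstacle is the bookkeeping of the approximate-eigenvector argument: $\rho(w^*v)$ need not have genuine eigenvectors, so one works with spectral projections $E = E_{[\mu e^{-i\delta}, \mu e^{i\delta}]}$ and unit vectors $\xi$ in their range, controlling $\|\rho(v)\xi - \mu\,\rho(w)\xi\| = \|\rho(w)(\rho(w^*v) - \mu)\xi\| \leq |\rho(w^*v)\xi - \mu\xi|$, which is at most roughly $\delta$ on $E K$; then one lets $\delta \to 0$. A cleaner packaging is to pass to an irreducible (or just cyclic) representation and use that for a single unitary $u = \rho(w^*v)$ and any $\mu \in \mathrm{sp}(u)$ there is a state $f$ with $f(u) = \mu$, hence a GNS vector that is an approximate eigenvector; but either way the content is elementary spectral theory for a single unitary, and I expect no genuine difficulty beyond making the $\varepsilon$'s line up. One should also take $\mu$ to be the point of $\mathrm{sp}(\rho(w^*v))$ — equivalently $\mathrm{sp}(w^*v)$, by faithfulness — that makes $\|\mu a - b\|$ smallest, though in fact any $\mu$ in the spectrum already yields a lower bound $\geq \|\mu a - b\| \geq \inf_{\lambda\in\bbT}\|\lambda a - b\|$, so this optimization is not even needed.
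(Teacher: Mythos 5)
Your proposal is correct and follows essentially the same route as the paper: pick $\lambda$ in the spectrum of the unitary $w^*v$, take an approximate eigenvector $\eta$ so that $v\eta\approx\lambda w\eta$, and test $a\otimes v-b\otimes w$ on elementary tensors $\xi\otimes\eta$ to extract the lower bound $\|\lambda a-b\|$ up to an error controlled by $\varepsilon$. The only cosmetic difference is that the paper works directly with a spatial representation on $H_1\otimes H_2$ rather than phrasing things via $\pi\otimes\rho$ and spectral projections, but the content is identical.
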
 

\begin{proof} Fix a representation of $C\otimes D$ on $H_1\otimes H_2$. 
Fix $\e>0$ 
and $\lambda$ in the spectrum of $w^*v$.  
Pick a unit vector $\eta$ in $H_2$ such   that $\|w^*v\eta-\lambda\eta\|<\e$. 
Now find a unit vector $\xi$ in $H_1$ such that $\|(\lambda a-b)\xi\|>\|a-b\|-\e$. 
Then 
\begin{align*}
\|(a\otimes v-b\otimes w)(\xi\otimes \eta)\|
&= \|(\lambda a\otimes v -b\otimes \lambda w)(\xi\otimes \eta)\|\\
&\geq \|((\lambda a-b)\otimes v)(\xi\otimes \eta)\|-\|(b\otimes (v-\lambda w))(\xi\otimes \eta)\|\\
&>\|\lambda a-b\|-\e(1+\|b\|). 
\end{align*}
Since $\e>0$ was arbitrary, the conclusion follows. 
\end{proof}

The algebra  in \eqref{L.M-n.2} of Lemma~\ref{L.M-n} below, 
with $n=4$, corresponds to 
\spreaddiagramcolumns{+1pc}
\spreaddiagramrows{+1pc}
\[
\diagram
\bullet^{v_1}\xline[d] &
\bullet^{v_2}\xline[d] &
\bullet^{v_3}\xline[d] &
\bullet^{v_4}\xline[d] \\
\bullet^{u_1}&
\bullet^{u_2}&
\bullet^{u_3}&
\bullet^{u_4}
\enddiagram
\]
and the algebra in \eqref{L.M-n.3} of the same lemma, with $l=2$ and $n=2$, corresponds 
to any graph of the form (the dashed line means that the vertices may or may not be adjacent)
\[
\diagram
\bullet^{v_1}\xline[d] &
\bullet^{v_2}\xline[d] &
\bullet^{v_3}\xline[d]\xdashed[lld]\xdashed[ld] &
\bullet^{v_4}\xline[d]\xdashed[llld]\xdashed[lld] \\
\bullet^{u_1}&
\bullet^{u_2}&
\bullet^{u_3}&
\bullet^{u_4}
\enddiagram
\]
The proof of  Lemma~\ref{L.M-n} is implicit in \cite{FaKa:Nonseparable} but we sketch it for the 
reader's convenience. A related result is proved in Lemma~\ref{L.graphs} below.

\begin{lemma} \label{L.M-n} For a  C*-algebra $A$ the following
are equivalent. 
\begin{enumerate}
\item \label{L.M-n.1}  $A$ is isomorphic to $M_{2^n}(\bbC)$.
\item\label{L.M-n.2}    $A$ is generated by self-adjoint unitaries $u_1,\dots, u_n$ and $v_1,\dots, v_n$ such that 
$u_i$ and $v_j$ commute if and only if $i=j$
and $u_i$ an $v_j$ anti-commute if and only if $i\neq j$. 
\item\label{L.M-n.3} 
   $A$ is generated by self-adjoint unitaries $u_1,\dots, u_n$ and $v_1,\dots, v_n$ such that 
for some $l\leq n$ we have 
 \begin{enumerate}
 \item If $j\leq l$ then $u_i$ and $v_j$ anti-commute if and only if~$i=j$.
  \item If $l<i$ then $u_i$ and $v_j$ anti-commute if and only if $i=j$ 
\end{enumerate}
\end{enumerate}
\end{lemma}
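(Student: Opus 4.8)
The plan is to prove the chain of implications $\eqref{L.M-n.1}\Rightarrow\eqref{L.M-n.2}\Rightarrow\eqref{L.M-n.3}\Rightarrow\eqref{L.M-n.1}$. The implication $\eqref{L.M-n.2}\Rightarrow\eqref{L.M-n.3}$ is trivial: take $l=n$ (or $l=0$), so that condition (a) of \eqref{L.M-n.3} becomes exactly the hypothesis of \eqref{L.M-n.2} and condition (b) is vacuous. For $\eqref{L.M-n.1}\Rightarrow\eqref{L.M-n.2}$, I would exhibit explicit generators inside $M_{2^n}(\bbC)=M_2(\bbC)^{\otimes n}$: let $u_i$ act as $\begin{pmatrix} 1 & 0 \\ 0 & -1\end{pmatrix}$ in the $i$-th tensor leg and as the identity elsewhere, and let $v_i$ act as $\begin{pmatrix} 0 & 1 \\ 1 & 0\end{pmatrix}$ in the $i$-th leg and again the identity elsewhere. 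Then $u_i$ and $v_i$ obviously commute (same leg, and these two matrices commute with the identity matrices in other legs — wait, they are in the same leg so one checks $\begin{pmatrix} 1 & 0 \\ 0 & -1\end{pmatrix}\begin{pmatrix} 0 & 1 \\ 1 & 0\end{pmatrix}=\begin{pmatrix} 0 & 1 \\ -1 & 0\end{pmatrix}=-\begin{pmatrix} 0 & 1 \\ 1 & 0\end{pmatrix}\begin{pmatrix} 1 & 0 \\ 0 & -1\end{pmatrix}$, so in fact $u_i,v_i$ \emph{anti}-commute). So instead I would let $v_i$ act as $\begin{pmatrix} 0 & 1 \\ 1 & 0\end{pmatrix}$ in leg $i$ and as $\begin{pmatrix} 1 & 0 \\ 0 & -1\end{pmatrix}$ in every leg $j\neq i$; then $u_iv_j$ for $i\neq j$ picks up exactly one anticommuting pair of Pauli-type matrices (in leg $i$ and in leg $j$ there is a matched pair, giving a sign $(-1)^2=1$ — so I must be more careful), the standard fix being to use in $v_i$ the matrix $\begin{pmatrix} 0 & 1 \\ 1 & 0\end{pmatrix}$ in leg $i$ and $\begin{pmatrix} 1 & 0 \\ 0 & -1\end{pmatrix}$ in legs $j<i$ only. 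The cleanest route is the Jordan–Wigner style choice, and checking that these $2n$ unitaries generate all of $M_2(\bbC)^{\otimes n}$ is immediate because products of $u_i$ and $v_i$ already give both Pauli generators in each leg.

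For the main implication $\eqref{L.M-n.3}\Rightarrow\eqref{L.M-n.1}$, I would argue that the relations force $A$ to be a quotient of $M_{2^n}(\bbC)$, and since that algebra is simple the quotient map is an isomorphism. Concretely: from condition (a), the pairs $\{u_j,v_j\}$ for $j\leq l$ each generate a copy of $M_2(\bbC)$ (two anticommuting self-adjoint unitaries generate $M_2$), and from condition (b) the same holds for $j>i$ — rereading, condition (b) as stated says ``if $l<i$ then $u_i$ and $v_j$ anticommute iff $i=j$'', which for each $i>l$ again pairs $u_i$ with $v_i$. The subtlety is the commutation between different pairs: I would first replace the given generators by a ``twisted'' set $u_i', v_i'$ obtained by multiplying each $u_i$ and $v_i$ by a suitable word in the others so that distinct pairs commute exactly — this is the standard trick (multiplying $v_j$ by the product of the $u_i$ with $i$ in an appropriate index set) that converts a system with arbitrary inter-pair commutation signs into a tensor-product system. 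Once distinct twisted pairs commute and each twisted pair generates an $M_2$, the universal property gives a surjection $M_2(\bbC)^{\otimes n}=M_{2^n}(\bbC)\twoheadrightarrow A$, and simplicity of the matrix algebra finishes the proof. The role of the parameter $l$ and the asymmetry between (a) and (b) is exactly that it still leaves enough structure to perform this twisting — the two clauses together guarantee that each $v_j$ anticommutes with $u_j$ and that we control its commutation with every other $u_i$, which is all the twisting needs.

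The step I expect to be the main obstacle is the bookkeeping in $\eqref{L.M-n.3}\Rightarrow\eqref{L.M-n.1}$: constructing the twisted generators and verifying that after twisting all distinct pairs commute, while each pair still consists of two anticommuting self-adjoint unitaries. This is where conditions (a) and (b) must be used precisely, and where the flexibility encoded by $l$ matters — one has to check that the information ``$u_i$ anticommutes with $v_j$ iff $i=j$'', available in the two overlapping ranges $j\le l$ and $i>l$, suffices to determine all the relevant signs and hence to choose the twisting words. The reference to \cite{FaKa:Nonseparable} suggests this computation is essentially there; I would reproduce just the twisting step explicitly and cite the rest.
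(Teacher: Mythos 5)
Your overall architecture --- each matched pair $\{u_j,v_j\}$ generates a copy of $M_2(\bbC)$, the commuting copies assemble into $M_{2^n}(\bbC)$ via simplicity, and condition (3) is reduced to the tensor-product situation by twisting the generators --- is the same as the paper's, but the two places where you stop short are exactly where the content lives. First, your trouble with (1)$\Rightarrow$(2) stems from a typo in the statement: as literally written, (2) is false already for $n=1$, since two \emph{commuting} self-adjoint unitaries generate a commutative algebra, never $M_2(\bbC)$. The intended reading (confirmed by the figure preceding the lemma, by the matrices $u_1=\left(\begin{smallmatrix}1&0\\0&-1\end{smallmatrix}\right)$, $v_1=\left(\begin{smallmatrix}0&1\\1&0\end{smallmatrix}\right)$ used in the paper's proof, and by consistency with (3)) is that $u_i$ and $v_j$ \emph{anti}commute iff $i=j$. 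Under that reading your very first construction --- $u_i$ acting as $\left(\begin{smallmatrix}1&0\\0&-1\end{smallmatrix}\right)$ and $v_i$ as $\left(\begin{smallmatrix}0&1\\1&0\end{smallmatrix}\right)$ in the $i$-th tensor leg, identity elsewhere --- is already correct and complete; the Jordan--Wigner detour is unnecessary, and had the literal reading been intended no construction could have succeeded.

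Second, the twist in (3)$\Rightarrow$(2) is the heart of the lemma, and the recipe you give for it fails: multiplying $v_j$ by a product of $u_i$'s does not change any of the signs $v_ju_m=\pm u_mv_j$, because the $u_i$'s commute with one another (the standing assumption in every application of the lemma, and one without which nothing can be controlled), so $v_j\prod_{i\in S}u_i$ has exactly the same commutation pattern with the $u_m$'s as $v_j$ itself. The correct twist multiplies by $v_i$'s: the only signs left undetermined by (a) and (b) are those between $u_i$ and $v_j$ with $i\le l<j$, so for $j>l$ one sets $K(j)=\{i\le l:\ v_ju_i=-u_iv_j\}$ and $w_j=v_j\prod_{i\in K(j)}v_i$. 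Since each $v_i$ with $i\le l$ anticommutes with $u_i$ and commutes with every other $u_m$, these factors flip exactly the bad signs: $w_j$ commutes with $u_m$ for every $m\le l$ and anticommutes with $u_m$ iff $m=j$ for $m>l$; moreover $v_j=w_j\prod_{i\in K(j)}w_i$, so the new system still generates $A$. You correctly identified this bookkeeping as the main obstacle, but deferring it (and pointing at the wrong twisting word) leaves the only nontrivial implication unproved.
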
 

\begin{proof}  
The case $n=1$ is \cite[Lemma~4.1]{FaKa:Nonseparable}, 
using  
\[
u_1=\left(\begin{matrix} 1 & 0 \\ 0 & -1 \end{matrix}\right)
\qquad \text{and} \qquad
v_1=\left(\begin{matrix} 0 & 1 \\ 1 & 0 \end{matrix}\right)
\]
and the fact that $A$ is a noncommutative C*-algebra that is a 4-dimensional vector
space over $\bbC$ for the converse. 

Fix $n>1$. 
Note that $M_{2^n}(\bbC)$ is 
isomorphic to $\bigotimes_{i=1}^n M_2(\bbC)$. 
Using the convention stated before the lemma, identify 
the unitaries $u_i$ and $v_i$ generating the $i$-th  copy of $M_2(\bbC)$ 
with elements of $M_{2^n}(\bbC)$. Then $u_i, v_i$, for $1\leq i\leq n$ are 
as in \eqref{L.M-n.2}

To see that \eqref{L.M-n.2} implies \eqref{L.M-n.1}, 
assume $A$ is generated by $u_i,v_i$, for $1\leq i\leq n$, 
as in the statement of the lemma. Then $A_i=C^*(u_i,v_i)$ is a subalgebra of $A$
isomorphic to $M_2(\bbC)$. These subalgebras are commuting and they 
generate~$A$, and therefore \eqref{L.M-n.1} follows. 

Since \eqref{L.M-n.3} is a special case of \eqref{L.M-n.2} (with either $l=1$ or $l=n$)
it remains to prove \eqref{L.M-n.3} implies \eqref{L.M-n.2}. 
For $l<j\leq n$ define 
 \begin{align*}
 K(j)&=\{i\leq l: v_j u_i=-u_i v_j\}.
 \end{align*}
 For all $m\leq n$ we have that 
 $w_j=v_j\prod_{i\in K(j)} v_i$
 commutes with $u_m$ if $m\neq j$ and anticommutes with $u_m$ if $m=j$.
  
Let $w_j=v_j$ for $j\leq l$. 
Since for $l<j\leq n$ we have $v_j=w_j\prod_{i\in K(j)} w_i$, $A$ is generated by 
$w_1,\dots, w_n$ and $u_1,\dots, u_n$ and they satisfy \eqref{L.M-n.2}. 
\end{proof}

 For a set $Y$ 
identify the power-set of $Y$ with $2^Y$ and consider it with the product topology. 
If $\bbA\subseteq 2^Y$ then let $G(Y,\bbA)$ denote the bipartite graph 
with the set of vertices $Y\cup \bbA$ such that $i\in Y$ and $x\in \bbA$ are adjacent
if and only if $i\in x$. 

\begin{lemma} \label{L.B} 
Assume $\bbA\subseteq 2^Y$. Then   
 the C*-algebra $B=B(G(Y,\bbA))$ has a representation on a Hilbert space of density $|Y|$. 
  If $\bbA$ is dense in $2^Y$ then this representation 
 can be chosen to be
irreducible. 
\end{lemma}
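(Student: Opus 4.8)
The plan is to build an explicit model of $B=B(G(Y,\bbA))$ on the $\ell^2$-space of the group of finite subsets of $Y$ and then read off irreducibility from a matrix-entry computation in the commutant. Concretely, let $\Gamma$ be the group of all finite subsets of $Y$ under symmetric difference $\triangle$ (i.e.\ $\bigoplus_{i\in Y}\bbZtwo$), so $|\Gamma|=|Y|+\aleph_0$, let $H=\ell^2(\Gamma)$ with standard orthonormal basis $(e_s)_{s\in\Gamma}$. By the universal property of $B(G)$ it suffices to exhibit on $H$ self-adjoint unitaries $u_i$ ($i\in Y$) and $u_x$ ($x\in\bbA$) that square to $1$ and commute or anticommute according to $G(Y,\bbA)$. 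I would take $u_i$ to be the ``flip'' $u_ie_s=e_{s\triangle\{i\}}$ and $u_x$ to be the diagonal unitary $u_xe_s=(-1)^{|x\cap s|}e_s$ (well defined since $s$ is finite). Each is a self-adjoint unitary with square $1$; the $u_i$ mutually commute, the $u_x$ mutually commute (they are simultaneously diagonal); and since $|x\cap(s\triangle\{i\})|$ equals $|x\cap s|$ when $i\notin x$ and differs from it by $1$ when $i\in x$, one checks that $u_i$ and $u_x$ commute exactly when $i\notin x$ and anticommute exactly when $i\in x$. These operators therefore satisfy all the defining relations of $B$, so the universal property produces a unital representation of $B$ on $H$, and $H$ has density $|Y|+\aleph_0$, which is $|Y|$ in the relevant case where $Y$ is infinite.

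Now suppose $\bbA$ is dense in $2^Y$; I would show that any $T\in B(H)$ commuting with all $u_i$ and all $u_x$ is scalar. Write $T_{s,t}=\langle Te_t,e_s\rangle$. Since $u_x$ is diagonal with eigenvalue $(-1)^{|x\cap\,\cdot\,|}$, the relation $Tu_x=u_xT$ forces $((-1)^{|x\cap t|}-(-1)^{|x\cap s|})T_{s,t}=0$, so $T_{s,t}=0$ whenever $|x\cap s|\not\equiv|x\cap t|\pmod 2$ for some $x\in\bbA$. The substantive point is that density of $\bbA$ upgrades this to ``$T$ is diagonal'': if $s\neq t$ then $h:=s\triangle t$ is a nonempty finite subset of $Y$, the set $\{z\in 2^Y:|z\cap h|\text{ is odd}\}$ is a nonempty clopen subset of $2^Y$, hence it contains some $x\in\bbA$, and for this $x$ one has $|x\cap h|$ odd, while $|x\cap s|\equiv|x\cap t|+|x\cap h|\pmod 2$, so $|x\cap s|\not\equiv|x\cap t|$ and therefore $T_{s,t}=0$. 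Thus $Te_s=\lambda_se_s$ for scalars $\lambda_s$.

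Finally, the relations $Tu_i=u_iT$ say that $T$ commutes with every flip, i.e.\ $\lambda_{s\triangle\{i\}}=\lambda_s$ for all $s\in\Gamma$ and $i\in Y$; since the singletons generate $\Gamma$ under $\triangle$, all $\lambda_s$ coincide, so $T$ is scalar and the representation is irreducible. The only place the density hypothesis enters, and the only step beyond routine bookkeeping, is the middle one: separating two distinct finite subsets of $Y$ by a \emph{parity}-of-intersection condition, which is precisely a nonempty clopen condition on $2^Y$ and hence is witnessed inside the dense set $\bbA$; the verification of the relations and the commutant argument are otherwise standard.
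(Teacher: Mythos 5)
Your proof is correct, and while the representation you construct is in fact the same one the paper uses --- your $\ell^2(\Gamma)$, with $\Gamma$ the Boolean group of finite subsets of $Y$, is the standard unitary picture of the infinite tensor product $\bigotimes_{i\in Y}(\bbC^2,\zeta_i)$ with reference vector $\zeta_i=\begin{pmatrix}1\\0\end{pmatrix}$, under which $e_s$ corresponds to the elementary tensor flipped exactly at the coordinates in $s$, your flip $u_i$ corresponds to $\begin{pmatrix}0&1\\1&0\end{pmatrix}$ in the $i$-th slot, and your diagonal $u_x$ to $\bigotimes_{i\in x}\begin{pmatrix}1&0\\0&-1\end{pmatrix}$ --- your irreducibility argument is genuinely different. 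The paper proves irreducibility by showing that for each finite $F\subseteq Y$ the operators $u_i$ and $v_{x\cap F}$ generate all of $\cB(H_F)$ (via its Lemma~\ref{L.M-n} on anticommutation patterns generating $M_{2^n}(\bbC)$, with density of $\bbA$ supplying for each $i\in F$ some $x$ with $x\cap F=\{i\}$), and then deduces that the algebra acts transitively on a dense family of unit vectors. You instead compute the commutant directly: commuting with the simultaneously diagonal $u_x$ forces off-diagonal matrix entries to vanish, because density of $\bbA$ lets you hit the nonempty clopen set $\{z:|z\cap(s\triangle t)|\text{ odd}\}$ and thereby separate any two distinct basis vectors by an eigenvalue of some $u_x$; commuting with the flips then equalizes the diagonal. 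Your route is more self-contained (it bypasses Lemma~\ref{L.M-n} entirely) and isolates cleanly the single point where density is used, whereas the paper's route yields the stronger local statement that the image contains each $\cB(H_F)$, which is structural information reused elsewhere. Both are complete proofs of the lemma as stated; the only cosmetic caveat is that the density of your Hilbert space is $|Y|+\aleph_0$, which equals $|Y|$ only in the (intended, and only nontrivial) case that $Y$ is infinite --- a caveat that applies equally to the paper's own formulation.
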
 

\begin{proof}  
We shall denote the generating untaries by $u_i$, $i\in Y$ and $v_x$, $x\in \bbA$. 

For each pair $i\in Y$, $x\in \bbA$ let $H_{i,x}$ be the two-dimensional 
complex Hilbert space and let $\zeta_{i,x}$ denote the 
vector $\begin{pmatrix} 1\\ 0\end{pmatrix}$ in $H_{i,x}$. 
We shall represent $B$ on $H=\bigotimes_{i\in Y} (H_{i},\zeta_{i})$. 
(Recall that this is the closure of the linear span of elementary tensors  of the form 
$\bigotimes_{i} \xi_{i}$ such that $\xi_{i}=\zeta_{i}$ for all but finitely many pairs
$i$.) Since the character density of $H$ is equal to $|Y|$, this will prove the claim. 
For $i\in Y$ let $u_i\in\cB(H)$ be defined by 
\[
\textstyle u_i=\bigotimes_{j\in Y} u_{ij}
\]
where $u_{ii}=
\begin{pmatrix} 0 & 1 \\ 1 & 0 \end{pmatrix} $ and $u_{ij}$ is the identity matrix whenever $i\neq j$. 
For $x\in \bbA$ let $v_x\in \cB(H)$ be defined by 
(using  the convention that  the omitted terms are equal to the identity matrix)
\[
v_x=\bigotimes_{i\in x} \begin{pmatrix} 1 & 0 \\ 0 & -1\end{pmatrix}. 
\]
Since 
$\textstyle\begin{pmatrix} 1 & 0 \\ 0 & -1\end{pmatrix} \begin{pmatrix} 1\\ 0\end{pmatrix} =
\begin{pmatrix} 1\\ 0\end{pmatrix}$, every elementary tensor of the form 
$\bigotimes_i \xi_i$ such that $\xi_i=\zeta_i$ for all but finitely many $i$ is sent to an 
elementary tensor of this form. 
Since $H$ is the closed linear span of such vectors, 
 $v_x$ is an operator on~$H$.

 It is clear that $v_x$ and $u_i$ commute if $i\notin x$ and that 
 $v_x$ and $u_i$ anticommute if $i\in x$. Since $B$ was assumed to be simple, 
 it is isomorphic to the 
  algebra $C^*(\{u_i\colon i\in Y\}\cup \{v_x\colon x\in \bbA\})$. 
  
 Now assume $\bbA$ is dense in $2^Y$. 
For  $F\subseteq Y$ write $H_F$ for $\bigotimes_{i\in F} H_i$. 
Fix a finite $F\subseteq Y$ and write  
\[
\textstyle\zeta=\bigotimes_{i\in Y\setminus F} \zeta_i.
\] 
Therefore $\xi\in H_F$ implies $\xi\otimes\zeta\in H_Y$. 
For every $x\subseteq Y$ and every $\xi\in H_F$ we have $v_x(\xi\otimes \zeta)=
(v_{x\cap F}\xi)\otimes\zeta$. 
Since $\bbA$ is dense in $2^Y$,   Lemma~\ref{L.M-n} 
implies $C^*(\{u_i: i\in F\}\cup \{v_{x\cap F}: x\in \bbA\})=\cB(H_F)$. 
Therefore for any two unit vectors  $\xi\otimes \zeta$  and $\eta\otimes\zeta$
there is $a\in B$ such that $a\xi=\eta$. Since $H_Y$ is the 
direct limit of $H_F$ for $F\subseteq Y$ finite, we conclude that $H_Y$ has 
no nontrivial closed $B$-invariant subspace. 
\end{proof}

\begin{definition}\label{D.independent}
A family $\bbA$ of subsets of $Y$ is \emph{independent} if for all finite disjoint subsets 
$F\neq \emptyset$ and $G$ of $\bbA$ we have that $\bigcap F\setminus \bigcup G$ is nonempty. 
\end{definition} 
It is not difficult to see  that if $\bbA$ is infinite then 
 this is equivalent to requiring such intersections to  always be infinite. 
The proof of this  fact is included in the proof of 
Lemma~\ref{L.hat}.

A \emph{full matrix algebra} is an algebra of the form $M_n(\bbC)$. Following \cite{FaKa:Nonseparable} 
we say that an algebra is AM (approximately matricial) if it is a direct limit of full matrix algebras. 
The following lemma will be generalized in Lemma~\ref{L.characterization}. 

\begin{lemma}  \label{L.directed} 
Assume $\bbA$ is infinite, independent, and dense in $2^Y$. 
Then $B=B(G(Y,\bbA))$  is simple, nuclear, unital  and it has 
the unique trace. 
\end{lemma}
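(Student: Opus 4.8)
The plan is to build $B=B(G(Y,\mathbb A))$ as a direct limit of full matrix algebras indexed by the finite subsets of $Y$, and read off simplicity, nuclearity, unitality and uniqueness of the trace from that structure. First I would fix a finite $F\subseteq Y$ and consider the finite subgraph of $G(Y,\mathbb A)$ on the vertices $F\cup\{x\cap F:x\in\mathbb A\}$; since $\mathbb A$ is independent, every subset of $F$ is realized as $x\cap F$ for some $x\in\mathbb A$ (this is exactly the finitary content of independence, cf.\ the remark after Definition~\ref{D.independent}), so in the representation from Lemma~\ref{L.B} the subalgebra generated by $\{u_i:i\in F\}\cup\{v_{x\cap F}:x\in\mathbb A\}$ fits the hypotheses of Lemma~\ref{L.M-n}\eqref{L.M-n.3} and hence equals a full matrix algebra $\mathcal B(H_F)\cong M_{2^{|F|}}(\mathbb C)$. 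As $F$ ranges over finite subsets of $Y$ these matrix algebras are nested and their union is dense, so $B$ is AM; in particular $B$ is unital (each $M_{2^{|F|}}(\mathbb C)$ shares the unit) and nuclear (a direct limit of nuclear algebras is nuclear), and being a UHF-like direct limit of full matrix algebras it is simple with a unique trace.

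To make this rigorous I would argue in the following order. \emph{Step 1:} extract from independence of $\mathbb A$ the statement that for every finite $F\subseteq Y$ and every $S\subseteq F$ there is $x\in\mathbb A$ with $x\cap F=S$; this also shows $\mathbb A$ is dense in $2^Y$ is automatic, but density is anyway assumed. \emph{Step 2:} using the representation of Lemma~\ref{L.B} on $H=\bigotimes_{i\in Y}(H_i,\zeta_i)$, show $C^*(\{u_i:i\in F\}\cup\{v_{x\cap F}:x\in\mathbb A\})$ acting on $H_F$ is all of $\mathcal B(H_F)$, by verifying the commutation pattern required in Lemma~\ref{L.M-n}. \emph{Step 3:} observe that for $F\subseteq F'$ the inclusion $\mathcal B(H_F)\hookrightarrow\mathcal B(H_{F'})$ is the canonical unital embedding $M_{2^{|F|}}(\mathbb C)\hookrightarrow M_{2^{|F'|}}(\mathbb C)$, so $B=\varinjlim_F \mathcal B(H_F)$ over the directed set of finite subsets of $Y$. \emph{Step 4:} conclude. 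For simplicity: any nonzero ideal $I$ of $B$ meets some $\mathcal B(H_F)$ nontrivially, and since the latter is simple it contains $1$, so $I=B$. For the trace: the normalized traces $\tau_F$ on $\mathcal B(H_F)$ are compatible with the unital embeddings (each $M_{2^k}(\mathbb C)\hookrightarrow M_{2^{k'}}(\mathbb C)$ is trace-preserving for the normalized traces), so they glue to a trace $\tau$ on the dense subalgebra and extend; uniqueness follows because any trace restricts to the unique normalized trace on each $\mathcal B(H_F)$ and these determine it on a dense set.

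The step I expect to carry the real weight is \emph{Step 2}, i.e.\ identifying $C^*(\{u_i:i\in F\}\cup\{v_{x\cap F}:x\in\mathbb A\})$ with the full matrix algebra $\mathcal B(H_F)$. This is where independence of $\mathbb A$ is genuinely used: one needs enough sets $x\cap F$ to produce, via Lemma~\ref{L.M-n}\eqref{L.M-n.3}, a generating system of self-adjoint unitaries with the prescribed (anti)commutation pattern — concretely, for each $i\in F$ one wants $v_{\{i\}}$ (or something cohomologous to it) available, which requires some $x\in\mathbb A$ with $x\cap F=\{i\}$. Once the right generators are in hand, Lemma~\ref{L.M-n} does the rest mechanically. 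Everything else — nestedness of the embeddings, the direct-limit description, and the standard arguments that a direct limit of full matrix algebras along unital connecting maps is simple, nuclear, unital with a unique trace — is routine and I would only sketch it.
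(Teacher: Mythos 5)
There is a genuine gap, and it sits exactly at the step you flag as carrying the weight. The truncated unitaries $v_{x\cap F}$ (acting as $\Flipw$ on the coordinates in $x\cap F$ and as the identity elsewhere) are \emph{not} elements of $B$: the generator of $B$ is $v_x=\bigotimes_{i\in x}\Flipw$, which acts nontrivially on all of $x$, and there is no way to cancel its tail outside $F$ using other generators (two members of an independent family never agree off a finite set). Consequently your finite stages $C^*(\{u_i:i\in F\}\cup\{v_{x\cap F}:x\in\bbA\})\cong\cB(H_F)$ are subalgebras of $\cB(H)$ but not of $B$, and the direct limit you compute in Step~3 is $\bigotimes_{i\in Y}M_2(\bbC)$, not $B$. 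This cannot equal $B$ in the cases that matter: for $Y=\bbN$ and $|\bbA|=2^{\aleph_0}$ the former is the separable CAR algebra while $B$ has character density $2^{\aleph_0}$ by Lemma~\ref{L.well-defined}. A second, related problem is that you conflate independence with density: the statement ``for every finite $F\subseteq Y$ and every $S\subseteq F$ there is $x\in\bbA$ with $x\cap F=S$'' is precisely density of $\bbA$ in $2^Y$, not independence, and independence does \emph{not} imply it (which is why the proof of Theorem~\ref{T3} has to modify the independent family by finite changes to make it dense). As written, your argument never uses independence at all, which is a decisive warning sign: for a dense but non-independent $\bbA$ one can have $x_1,x_2,x_3\in\bbA$ with $x_1\Delta x_2\Delta x_3=\emptyset$, and then $v_{x_1}v_{x_2}v_{x_3}$ is a central unitary, so $B$ is not simple and the lemma is false.

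The repair is to index the direct limit by pairs $(F,G)$ with $F\subseteq Y$ and $G\subseteq\bbA$ both finite, using the honest generators: $D(F,G)=C^*(\{u_i:i\in F\}\cup\{v_x:x\in G\})$, so that $B=\dirlim D(F,G)$. One then shows that the pairs for which $D(F,G)$ is a full matrix algebra (via the commutation pattern of Lemma~\ref{L.M-n}\eqref{L.M-n.3}) are cofinal. This is where the two hypotheses enter symmetrically: independence of $\bbA$ lets you choose, for each $x\in G$, a point $k\in x$ avoiding the other members of $G$ and the given $F$ (pairing each $v_x$ with an anticommuting $u_k$ that commutes with the other chosen generators), and density lets you choose, for each $i\in F$, a set $x\in\bbA$ whose intersection with the relevant finite set of points is exactly $\{i\}$. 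Your Step~4 (AM implies simple, nuclear, unital, unique trace) is fine once the correct direct limit is in place.
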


\begin{proof} 
We shall denote the generating untaries by $u_i$, $i\in Y$ and $v_x$, $x\in \bbA$. 
It suffices to prove  that $B$ is AM, since  every AM algebra is   
simple, nuclear, unital, and has the unique trace (\cite{FaKa:Nonseparable}). 

 Let $\Lambda_0$ be the set of all pairs $(F,G)$ such
 that $F\subseteq Y$ is finite, $G\subseteq \bbA$ is finite ordered by 
 the coordinatewise inclusion. 
 With 
 \[
 D(F,G)=C^*(\{u_i: i\in F\}\cup \{v_x: X\in G\})
 \]
 we have that $B=\dirlim_{\Lambda_0} D(F,G)$. 
 Now let $\Lambda$ be the set of all $(F,G)\in \Lambda_0$ such that 
 for some 
  $1\leq l \leq n\in \bbN$  we have the following. 
 \begin{enumerate}
 \item $F=\{x(1),\dots, x(n)\}$ and $G=\{k(1),\dots, k(n)\}$, 
 \item If $j\leq l$ then  $k(i)\in x(j)$ if and only if $i=j$,
 \item If $l<i$ then  $k(i)\in x(j)$ if and only if $i=j$.
 \end{enumerate}
Lemma~\ref{L.M-n}  implies that $D(F,G)$ is isomorphic to $M_{2^n}(\bbC)$ (with $n$ as above) and
it therefore suffices to prove that 
 $\Lambda$ is cofinal in $\Lambda_0$. 

Fix $(F,G)\in \Lambda_0$. We may assume $|F|=|G|=l$ and
enumerate them as $F=\{x(i): l<i\leq 2l\}$ and $G=\{k(i): i\leq l\}$. 
Since $\bbA$ is independent, for each $j$ such that $l<j\leq 2l$ we can pick 
\[
\textstyle k(j)\in x(j)\setminus \left(\bigcup_{l<i\leq 2l} x(i)\cup G\right).
\]
By the density of $\bbA$ for each $j\leq l$ pick $x(j)\in \bbA$ such that 
\[
x(j)\cap \{k(1),\dots, k(2l)\}=\{k(j)\}. 
\]
Let $F'=\{x(1),\dots, x(2l)\}$ and $G'=\{k(1),\dots, k(2l)\}$ and $n=2l$ we see that 
$(F',G')$ is in $\Lambda$, concluding the proof. 
\end{proof}

For a family $\bbA$ of subsets of $Y$ consider the dual family $\hat\bbA=\{z(i): i\in Y\}$ of subsets
of $\bbA$ defined by 
\[
x\in z(i)\text{ if and only if } i\in x.
\]
In the following lemma we   identify $\hat\bbA$ and $Y$ by identifying $i\in Y$ with $z(i)\in \hat\bbA$.

\begin{lemma}\label{L.hat} Assume  $Y,\bbA$, and $\hat \bbA$ are as above. 
\begin{enumerate}
\item  
the dual, $\hat{\hat \bbA}$, of $\hat\bbA$ is equal to  $\bbA$. 
\item $\bbA$ is dense in $2^Y$ if and only if $\hat\bbA$ is independent. 
\item $\hat\bbA$ is dense in $2^{\bbA}$ if and only if $\bbA$ is independent. 
\end{enumerate}
\end{lemma}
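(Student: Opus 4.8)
The plan is to prove all three parts by unwinding the definitions of the dual family and of density and independence, exploiting the symmetry between $Y$ and $\bbA$ that the dual operation exhibits. First I would record the basic translation: for $i\in Y$ the set $z(i)\in\hat\bbA$ collects exactly those $x\in\bbA$ with $i\in x$, so membership ``$x\in z(i)$'' is literally the same statement as ``$i\in x$''. This single equivalence is the engine for everything.

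For part (1), I compute the dual of $\hat\bbA=\{z(i):i\in Y\}$, which by definition is the family $\{\,\{z(i): x\in z(i)\}: x\in\bbA\,\}$ of subsets of $\hat\bbA$. Under the identification of $z(i)$ with $i$, the set $\{z(i): x\in z(i)\}$ becomes $\{i\in Y: i\in x\}=x$. Hence $\hat{\hat\bbA}=\bbA$, which is just bookkeeping once the identification is in place.

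For parts (2) and (3), I would observe that they are equivalent to each other via part (1) (apply (2) to $\hat\bbA$ in place of $\bbA$, using $\hat{\hat\bbA}=\bbA$), so it suffices to prove (2). Unpacking: $\bbA$ is dense in $2^Y$ iff for every pair of disjoint finite $F_0,F_1\subseteq Y$ there is $x\in\bbA$ with $F_0\subseteq x$ and $F_1\cap x=\emptyset$. Rewriting each condition ``$i\in x$'' as ``$x\in z(i)$'', this says: for all disjoint finite $F_0,F_1\subseteq Y$ there is $x$ with $x\in\bigcap_{i\in F_0} z(i)$ and $x\notin\bigcup_{i\in F_1} z(i)$, i.e. $\bigcap_{i\in F_0} z(i)\setminus\bigcup_{i\in F_1} z(i)\neq\emptyset$. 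Taking $F=\{z(i):i\in F_0\}$ and $G=\{z(i):i\in F_1\}$, this is exactly the statement that $\hat\bbA$ is independent in the sense of Definition~\ref{D.independent} (the nonempty-$F$ case is the meaningful one; $F=\emptyset$ corresponds to the trivially satisfiable density requirement for the empty basic clopen condition, or can be absorbed by noting density only needs witnesses for nonempty $F_0$, while the $F_0=\emptyset$ instances are vacuous). I would also insert here the promised remark that for infinite $\bbA$ one may take these intersections to be infinite: given a nonempty witness and any finite enlargement of $G$ by points not already constrained, independence still applies, so no single witness is forced and the intersection cannot be finite.

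The only mild obstacle is cosmetic rather than mathematical: matching the quantifier shape of ``dense in $2^Y$'' (which only asks for a witness avoiding/containing finitely many coordinates, with the empty-pattern case vacuous) against the ``$F\neq\emptyset$'' clause in Definition~\ref{D.independent}. I would handle this by noting that density is equivalent to the statement restricted to $F_0\neq\emptyset$ (since the $F_0=\emptyset$ instances impose nothing one cannot already get), so the correspondence with the $F\neq\emptyset$ clause of independence is exact. Everything else is a direct substitution using the defining equivalence $x\in z(i)\iff i\in x$.
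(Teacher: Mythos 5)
Your proposal is correct and follows essentially the same route as the paper: part (1) is definitional bookkeeping, part (3) is deduced from (1) and (2), and part (2) is the direct translation of a nonempty basic clopen set $\{x: F_0\subseteq x,\ F_1\cap x=\emptyset\}$ into the Boolean combination $\bigcap_{i\in F_0}z(i)\setminus\bigcup_{i\in F_1}z(i)$ via the equivalence $i\in x\iff x\in z(i)$. Your extra care with the $F_0=\emptyset$ instances (which are implied by the $F_0\neq\emptyset$ ones when $Y$ is infinite) is a detail the paper elides but handles nothing differently in substance.
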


\begin{proof} The first assertion is obvious and the third follows immediately from the first two. 

Assume $\bbA$ is not dense in $2^Y$ and fix a nonempty 
basic open set $U\subseteq 2^Y$ disjoint from $\bbA$. 
For some finite and disjoint $F\subseteq Y$ and $G\subseteq Y$ we have
that $U=\{x\in 2^Y: x\cap F=\emptyset$ and $G\subseteq x\}$. 
The Boolean combination   $\bigcap F\setminus \bigcup G=\emptyset$ 
witnesses that $\hat\bbA$ (identified with $Y$)
is not independent. 
Now assume $\bbA$ is dense in $2^Y$. This implies that its intersection 
with every nonempty basic open set is nonempty (and moreover 
infinite if $Y$ is infinite), and by the above argument $\hat\bbA$
is independent. 
\end{proof}

 We include a proof of the following classical result (\cite[(A6) on p. 288]{Ku:Book})
 for reader's convenience. 
 
\begin{lemma} [Fichtenholz--Kantorovich]
\label{L.independent} 
There exists an independent family of subsets of $\bbN$ of cardinality continuum. 
\end{lemma}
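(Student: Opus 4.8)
The plan is to build an independent family of subsets of $\bbN$ by working instead with a countable index set that carries convenient combinatorial structure, and then transporting the family back to $\bbN$ via a bijection. A standard and clean choice is to take the index set to be $\bbQ$, or equivalently the set of all pairs $(s,t)$ where $s$ is a finite subset of $\bbQ$ and $t\subseteq s$; both are countable, so a bijection with $\bbN$ exists, and it suffices to produce an independent family of size continuum indexed by that set.

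First I would fix the index set $I=\{(s,t): s\in[\bbR]^{<\aleph_0}, t\subseteq s\}$, which is countable. For each real number $r\in\bbR$ define
\[
A_r=\{(s,t)\in I: r\in t\}.
\]
This gives a family $\{A_r: r\in\bbR\}$ of subsets of $I$ of cardinality continuum; distinct reals $r\neq r'$ yield distinct sets because one can separate them by a pair $(s,t)$ with $r\in t$, $r'\notin t$. The key step is to verify independence: given finitely many distinct reals $r_1,\dots,r_k$ (to be intersected) and finitely many further distinct reals $r_1',\dots,r_m'$ (to be avoided), choose $s=\{r_1,\dots,r_k,r_1',\dots,r_m'\}$ and $t=\{r_1,\dots,r_k\}$. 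Then $(s,t)\in A_{r_j}$ for each $j\le k$ since $r_j\in t$, and $(s,t)\notin A_{r_i'}$ for each $i\le m$ since $r_i'\notin t$; hence $(s,t)\in\bigcap_{j} A_{r_j}\setminus\bigcup_i A_{r_i'}$, so this Boolean combination is nonempty, which is exactly Definition~\ref{D.independent}. Finally, transport the family to $\bbN$ using any bijection $I\to\bbN$, preserving both the cardinality continuum and independence.

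The only point requiring a moment of care is the choice of which finite structure to index by: one must make sure the index set is genuinely countable (so that $[\bbR]^{<\aleph_0}$, finite subsets, is the right object rather than $\bbR$ itself), and that the separating pair $(s,t)$ can always be found — but since any finite list of reals is itself a legitimate finite subset of $\bbR$, there is no obstacle here. There is essentially no hard part; the argument is a one-paragraph combinatorial observation once the right index set is identified, and the ``difficulty'' is purely the packaging, i.e.\ realizing that a family of size $2^{\aleph_0}$ can be coded inside the powerset of a countable set at all.
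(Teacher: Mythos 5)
There is a genuine gap at the very first step: the index set $I=\{(s,t):\ s\in[\bbR]^{<\aleph_0},\ t\subseteq s\}$ is \emph{not} countable. The collection $[\bbR]^{<\aleph_0}$ of finite subsets of $\bbR$ already has cardinality $2^{\aleph_0}$ (the singletons alone give continuum many elements), so passing from $\bbR$ to its finite subsets does not reduce the cardinality, and your family $\{A_r\}$ ends up living on a set of size continuum rather than on a countable set. The independence verification you give is correct as far as it goes, but what it proves is the trivial statement that there is an independent family of size continuum on a set of size continuum; the entire content of the lemma is in getting the underlying set down to $\bbN$. The tension is visible in your own setup: with $s$ ranging over finite subsets of $\bbQ$ the index set is countable, but then the condition $r\in t$ can only distinguish countably many indices $r$; with $s$ ranging over finite subsets of $\bbR$ you recover continuum many sets but lose countability. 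You cannot have both while the points of the space literally contain the indices as elements.

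The standard repair (and the one closest in spirit to what you wrote) is to index the family not by reals but by arbitrary subsets $A\subseteq\bbN$, and to take as points the pairs $(F,\mathcal F)$ with $F$ a finite subset of $\bbN$ and $\mathcal F$ a set of subsets of $F$; this set of points \emph{is} countable, since for each of the countably many $F$ there are only finitely many choices of $\mathcal F$. Setting $X_A=\{(F,\mathcal F):\ A\cap F\in\mathcal F\}$, your separation argument goes through once you choose $F$ large enough that the traces $A_1\cap F,\dots,A_k\cap F,A'_1\cap F,\dots,A'_m\cap F$ are pairwise distinct and put $\mathcal F=\{A_i\cap F:\ i\le k\}$. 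The paper's proof implements the same idea in a different coding: the continuum many indices are branches $f\in 2^{\bbN}$ of the binary tree, the points are finite subsets $T$ of the countable set $2^{<\bbN}$, and $X_f$ consists of those $T$ contained in a single level $m$ with $f\rs m\in T$; the finite restriction $f\rs m$ plays exactly the role of the finite trace $A\cap F$. Either way, the idea your proposal is missing is that each point of the countable set may only record a \emph{finite approximation} of the continuum many indices, drawn from a countable alphabet, with distinctness witnessed locally by choosing the approximation fine enough.
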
 

\begin{proof} 
Let $2^{<\bbN}$ denote the set of all finite sequences of $0$s and $1$s. 
Our family will consist of subsets of the (countable) set of all finite subsets of $2^{<\bbN}$. 
For $f\in 2^{\bbN}$ by $f\rs m$ we denote its initial segment of length $m$ and for $s\in 2^{<\bbN}$ by 
$|s|$ we denote its length. 
For $f\in 2^{\bbN}$ let 
\begin{equation*}
X_f=\{T\subseteq 2^{<\bbN}\colon 
(\exists m) |s|=m\text{ for all $s\in T$ and $f\rs m\in T$}\}.
\end{equation*}
Assume $m<n$ and $f_1,\dots, f_m, f_{m+1}, \dots f_n$ are distinct elements of $2^{\bbN}$. 
Fix $k$ large enough so that $f_i\rs k\neq f_j\rs k$ for all $i\neq j$. 
Then 
\begin{equation*}
s=\{f_i\rs k: i\leq m\}
\end{equation*}
belongs to $\bigcup_{i=1}^m X_{f_i} \setminus \bigcup_{j=m+1}^n X_{f_j}$. 
Therefore the family $\{X_f: f\in 2^{\bbN}\}$ is independent. 
\end{proof}

\begin{proof}[Proof of Theorem~\ref{T3}]
Let $\bbA$ be an independent family of subsets of $\bbN$ of 
size continuum  as in Lemma~\ref{L.independent}. 
The remark after Definition~\ref{D.independent} implies that  
 if $x\in \bbA$ is replaced with $x'$ such that the symmetric difference 
 $x\Delta x'$ is finite, then $(\bbA\cup\{x'\})\setminus \{x\}$ is still independent. 
 By making finite changes to countably many of the members of $\bbA$ we can therefore 
 assure $\bbA$ is both  
 dense and independent in $2^{\bbN}$. 
 By Lemma~\ref{L.B} the C*-algebra $B=B(G(\bbN,\bbA))$ 
  has an irreducible representation on a separable Hilbert space. 
Since the graphs $G(Y,\bbA)$ and $G(\bbA,\hat\bbA)$ are isomorphic, 
 $B$ is isomorphic to $B(G(\bbA,\hat\bbA))$. Since $|\bbA|=2^{\aleph_0}$, 
Lemma~\ref{L.B} implies that $B$ has an irreducible representation on a nonseparable Hilbert space. 
\end{proof}

The assumptions  of Lemma~\ref{L.directed} can be weakened. 
Instead of requiring~$\bbA$ to be independent, we may require 
  that 
for every $x\in \bbA$ and every finite $F\subseteq \bbA\setminus \{x\}$ 
the set $x\setminus \bigcup F$ is nonempty. 
Instead of requiring $\bbA$ to be dense, we can require 
that for every finite $s\subseteq Y$
and every $j\in s$ there is $x\in \bbA$ such that $x\cap s=\{j\}$. 
The proof of Lemma~\ref{L.hat} shows that $\bbA$ satisfies these two conditions
if and only if $\hat\bbA$ satisfies these two condtions. Therefore instead of an independent 
family, in the proof of Theorem~\ref{T3} 
 we could have used an \emph{almost disjoint} family, i.e., a family $\bbA$ of infinite subsets of $Y$ 
 such that $x\cap y$ is finite for all distinct $x$ and $y$ in $\bbA$. 
Uncountable almost disjoint families in $2^{\bbN}$ are well-studied set-theoretic objects. 

\section{More on algebras and graphs}
\label{S2}
 
Note that if $|V|=n$ then $B(G)$ is a $2^n$-dimensional vector space over~$\bbC$ 
since it is spanned by $v_{\bfs}=\prod_{x\in \bfs} v_x$ for $\bfs\subseteq V$ ($v_{\bfs}$ are defined
using a fixed linear order on $V$ for definiteness). 
On the collection of all graphs define the equivalence relation $\sim$ by 
$G_1\sim G_2$ if $B(G_1)$ and $B(G_2)$ are isomorphic. 

For a graph $G=(V,E)$, a finite subset $\bfs$ of $V$ and $x\in \bfs$ define
the graph $G-x+\bfs$ as follows. It vertex set is $V'=V\setminus \{x\}\cup \{\bfs\}$, 
hence $\bfs$ is considered as a vertex in the new graph. 
The adjacency relation  for vertices in $V\setminus\{x\}$ is unchanged, and 
we let $\bfs$ be adjacent to $u\in V\setminus \{x\}$ if and only if 
 $|\{w\in\bfs\colon \{w,u\}\in E\}|$ is an odd number.

 \begin{lemma} \label{L.switch} For $G,x$ and $\bfs$ as above the algebras $B(G)$ and 
 $B(G-x+\bfs)$ are isomorphic. 
 \end{lemma}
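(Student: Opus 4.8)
The plan is to construct an explicit surjective $*$-isomorphism $\phi\colon B(G-x+\bfs)\to B(G)$ from the universal property of $B(G-x+\bfs)$. Write $G'=G-x+\bfs$, let $u'_v$ (for $v\in V\setminus\{x\}$) together with $u'_{\bfs}$ (for the new vertex $\bfs$) denote the canonical generators of $B(G')$, and keep $u_v$, $v\in V$, for those of $B(G)$. The guiding idea is that $u'_{\bfs}$ should be sent to a suitable scalar multiple of the product $\prod_{w\in\bfs}u_w$, taken in the fixed linear order on $V$.

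First I would carry out the sign bookkeeping inside $B(G)$. Letting $e$ be the number of edges of $G$ with both endpoints in $\bfs$, iterating the (anti)commutation relations gives $\bigl(\prod_{w\in\bfs}u_w\bigr)^{2}=(-1)^{e}$ and $\bigl(\prod_{w\in\bfs}u_w\bigr)^{*}=(-1)^{e}\prod_{w\in\bfs}u_w$. Hence, setting $p=\prod_{w\in\bfs}u_w$ when $e$ is even and $p=i\prod_{w\in\bfs}u_w$ when $e$ is odd, $p$ is a self-adjoint unitary with $p^{2}=1$. Next, for $v\in V\setminus\{x\}$, inserting $u_v^{2}=1$ between consecutive factors of $p$ yields $u_vpu_v=(-1)^{d_v}p$, where $d_v=|\{w\in\bfs\colon\{w,v\}\in E\}|$ (as $G$ has no loops, the case $v\in\bfs$ causes no trouble). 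By the definition of the edges of $G'$, this says exactly that $p$ commutes with $u_v$ when $\bfs$ and $v$ are non-adjacent in $G'$ and anticommutes with it otherwise; and since the subgraph of $G'$ induced on $V\setminus\{x\}$ coincides with that of $G$, the family $(u_v)_{v\in V\setminus\{x\}}$ together with $p$ realizes the defining relations of $B(G')$. By Lemma~\ref{L.well-defined} and the universal property there is therefore a $*$-homomorphism $\phi\colon B(G')\to B(G)$ with $\phi(u'_v)=u_v$ and $\phi(u'_{\bfs})=p$.

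It remains to prove that $\phi$ is bijective. Surjectivity is immediate: $u_v=\phi(u'_v)$ lies in $\range(\phi)$ for $v\neq x$, and since $\prod_{w\in\bfs}u_w$ is a scalar multiple of $p=\phi(u'_{\bfs})$ while each $u_w$ with $w\in\bfs\setminus\{x\}$ also lies in $\range(\phi)$, multiplying the identity $\prod_{w\in\bfs}u_w=(\text{scalar})\cdot p$ on both sides by the remaining $u_w$'s (each its own inverse) exhibits $u_x$ as an element of $\range(\phi)$. For injectivity I would first take $G$ finite, where $B(G')$ and $B(G)$ are both $2^{|V|}$-dimensional over $\bbC$ (since $|V'|=|V|$; this is the dimension count recorded at the start of this section), so the surjection $\phi$ is automatically bijective, hence an isomorphism. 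For an arbitrary $G$ I would pass to finite pieces: $B(G')$ is the closed union of the subalgebras $B(H')$ as $H'$ ranges over the finite induced subgraphs of $G'$ containing the vertex $\bfs$; for such an $H'$, with $H:=(H'\setminus\{\bfs\})\cup\bfs\subseteq V$ a finite induced subgraph of $G$, one checks that $H'$ is an induced subgraph of $H-x+\bfs$ and that $\phi\rs B(H')$ is the composition $B(H')\hookrightarrow B(H-x+\bfs)\to B(H)\hookrightarrow B(G)$, whose middle arrow is the finite-graph isomorphism already obtained and whose outer arrows are canonical injections. Hence $\phi$ is isometric on a dense subalgebra, so injective, and therefore an isomorphism.

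The genuinely fiddly step is the scalar bookkeeping in the second paragraph: one must check on the nose that the normalization of $\prod_{w\in\bfs}u_w$ is always an honest order-two self-adjoint unitary and that its commutation behaviour exactly matches the parity rule defining $G-x+\bfs$, including for vertices that happen to lie in $\bfs$. The only other point requiring care is the reduction to the finite case, which rests on the fact that $B$ of a finite graph on $n$ vertices is $2^{n}$-dimensional, equivalently that the canonical map $B(H)\to B(G)$ is injective for every finite induced subgraph $H$. One can instead sidestep dimension counts entirely by observing that the switching operation is reversible --- switching $G'$ at the vertex $\bfs$ along the set $\{\bfs\}\cup(\bfs\setminus\{x\})$ returns $G$ --- which produces a candidate inverse $*$-homomorphism $B(G)\to B(G')$ directly; but verifying that the two composites are the identity then requires tracking the same signs.
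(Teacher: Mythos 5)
Your proof is correct and follows essentially the same route as the paper: normalize $\prod_{w\in\bfs}u_w$ to a self-adjoint unitary realizing the relations of $G-x+\bfs$ inside $B(G)$, and use that $x\in\bfs$ makes the operation reversible. You simply make explicit (via the $2^{|V|}$-dimension count and passage to finite induced subgraphs) the bijectivity step that the paper dispatches by observing that the correspondence of generating sets is canonical in both directions.
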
 

\begin{proof} In $B(G)$ 
 consider 
the product
$
u_{\bfs}=\prod_{i\in \bfs} u_i
$
(for definiteness, we are assuming that $V$ is well-ordered and the unitaries in the product are taken 
in this order). 
Then $u_{\bfs}$ is a unitary and one of  $u_{\bfs}$ and $i u_{\bfs}$ is self-adjoint, 
depending on whether the number of edges between the vertices in  $\bfs$  is even or odd. 
Let $w_{\bfs}$ denote this self-adjoint unitary.
Then the unitaries $\{u_x\colon x\in V\setminus \{x\}\}\cup\{w_{\bfs}\}$ clearly satisfy the 
relations corresponding to $G-x+\bfs$. 

Since $x\in\bfs$, in $B(G-x+\bfs)$ we can similarly define a unitary $w_x$ 
such that the unitaries 
$\{u_x\colon x\in V\setminus \{x\}\}\cup\{w_x\}$ satisfy the 
relations corresponding to $G$. 

We have shown that 
every algebra generated by unitaries satisfying relations corresponding 
to $G$ is also generated by unitaries satisfying relations corresponding to 
$G-x+\bfs$, and vice versa. Since this correspondence is given in a canonical 
way, we conclude that the universal algebras are isomorphic. 
\end{proof}

\begin{lemma} \label{L.graphs} 
For every graph $G$, if $|G|=n$ then there is $k\leq n/2$ such that with $l=n-2k$
we have that $B(G)$ is isomorphic to $M_{2^k}(\bbC)\otimes \bbC^{2^l}$. 
\end{lemma}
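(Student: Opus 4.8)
The plan is to use Lemma~\ref{L.switch} as a normal-form tool: we will repeatedly apply the moves $G\mapsto G-x+\bfs$ to transform an arbitrary graph $G$ on $n$ vertices into a graph that is a disjoint union of $k$ edges and $l=n-2k$ isolated vertices, for which the conclusion is immediate. Indeed, if $G'$ is the disjoint union of $k$ edges and $l$ isolated points, then $B(G')$ is the tensor product of $k$ copies of $B(K_2)$ and $l$ copies of $B$ of a single vertex; since $B(K_2)\cong M_2(\bbC)$ (two anticommuting self-adjoint unitaries, by the $n=1$ case of Lemma~\ref{L.M-n}) and $B$ of a single vertex is $\bbC^2$ (one self-adjoint unitary), we get $B(G')\cong M_2(\bbC)^{\otimes k}\otimes (\bbC^2)^{\otimes l}\cong M_{2^k}(\bbC)\otimes \bbC^{2^l}$. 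By Lemma~\ref{L.switch}, $B(G)\cong B(G')$, which is what we want.

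The combinatorial heart is therefore: every finite graph $G$ can be brought to such a ``perfect matching plus isolated vertices'' form by a sequence of $G\mapsto G-x+\bfs$ moves. First I would observe that over $\bbZ/2\bbZ$ the move $G\mapsto G-x+\bfs$ (with $x\in\bfs$) amounts to replacing the row/column of the adjacency matrix indexed by $x$ with the mod-$2$ sum of the rows/columns indexed by the elements of $\bfs$. In fact the cleanest way to package this is to work with the symmetric bilinear form over $\bbF_2$ on the vector space $V_2 = (\bbZ/2\bbZ)^V$ whose Gram matrix (with respect to the standard basis) is the adjacency matrix of $G$: a move $G\mapsto G-x+\bfs$ corresponds exactly to a change of basis replacing the basis vector $e_x$ by $\sum_{i\in\bfs}e_i$ (using $x\in\bfs$ so the new family is still a basis), and the resulting graph is the one whose adjacency matrix is the Gram matrix in the new basis. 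So the equivalence relation generated by these moves is: $B(G_1)\cong B(G_2)$ whenever the associated alternating $\bbF_2$-forms are equivalent under the subgroup of $GL_n(\bbF_2)$ generated by these elementary basis changes — and these particular elementary matrices generate all of $GL_n(\bbF_2)$.

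Then I would invoke the classification of alternating bilinear forms over $\bbF_2$: any symmetric bilinear form over $\bbF_2$ with zero diagonal (which is exactly what an adjacency matrix is — no loops) is congruent to a block-diagonal form consisting of $k$ hyperbolic planes $\bigl(\begin{smallmatrix}0&1\\1&0\end{smallmatrix}\bigr)$ and a zero block of size $l=n-2k$, where $2k$ is the rank. The hyperbolic part is the adjacency matrix of $k$ disjoint edges and the zero part corresponds to $l$ isolated vertices, so $G'$ is precisely the normal form we wanted, and $k\le n/2$ since $2k\le n$.

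I expect the main obstacle to be the bookkeeping in the two translations: (i) checking carefully that a single move $G\mapsto G-x+\bfs$ really does implement the elementary basis change $e_x\mapsto\sum_{i\in\bfs}e_i$ — one must verify that the new edge between $\bfs$ and $u$ (defined via the parity of $|\{w\in\bfs:\{w,u\}\in E\}|$) matches the $(\bfs,u)$-entry of the transformed Gram matrix, and also that the $(\bfs,\bfs)$-entry stays $0$, i.e.\ there is no loop, which is where the distinction between the self-adjoint $w_\bfs$ and $iw_\bfs$ in Lemma~\ref{L.switch} is invisible to the graph; and (ii) arguing that the specific elementary transvection-type matrices arising from these moves generate $GL_n(\bbF_2)$ (equivalently, that one can realize an arbitrary congruence by a sequence of such moves without ever needing, say, to negate a basis vector — automatic over $\bbF_2$ — or to apply a basis change that is not of the allowed ``replace $e_x$ by a sum over a set containing $x$'' shape). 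Both points are routine linear algebra over $\bbF_2$, but they are the places where an error could hide, so I would write them out explicitly.
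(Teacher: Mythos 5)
Your proof is correct, but it takes a genuinely different route from the paper. The paper argues by induction on $n$: it first shows by hand that every graph on three vertices reduces (via Lemma~\ref{L.switch}) to one with at most one edge, then, in the inductive step, brings the first $n$ vertices into canonical form and uses the three-vertex case repeatedly to strip the edges joining the new vertex to the matched pairs, finally collapsing its edges to the unmatched vertices into a single edge. You instead observe that the move $G\mapsto G-x+\bfs$ is exactly the change of basis $e_x\mapsto\sum_{i\in\bfs}e_i$ for the alternating $\bbF_2$-form whose Gram matrix is the adjacency matrix (the diagonal stays zero because $\sum_{i,j\in\bfs}A(i,j)$ counts each internal edge twice), that the resulting transvections generate $GL_n(\bbF_2)=SL_n(\bbF_2)$, and then invoke the classification of alternating forms over $\bbF_2$ into $k$ hyperbolic planes plus a zero block; the endgame $B(G')\cong M_2(\bbC)^{\otimes k}\otimes(\bbC^2)^{\otimes l}$ for the normal-form graph is the same in both treatments. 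Your version is less self-contained (it imports the classification of alternating forms) but buys something the paper's induction does not make explicit: the invariant is identified as $k(G)=\tfrac12\operatorname{rank}_{\bbF_2}(A_G)$, so two finite graphs on the same vertex set size satisfy $G\sim K$ if and only if their adjacency matrices have equal rank over $\bbF_2$. This gives a transparent polynomial-time test for the relation $\sim$ and thus bears directly on Question~\ref{Q.Complexity}. The two bookkeeping points you flag are indeed the only places where care is needed, and both check out as you describe.
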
 

\begin{proof}  We need to show that every graph $G$ with $n$ vertices is equivalent to 
a graph of the form
\begin{equation*} 
\diagram
\bullet\xline[d]& \bullet\xline[d]& \dots & \bullet\xline[d]\ &  &  &  &  & \\
\bullet& \bullet& \dots & \bullet 
& \bullet& \bullet & \dots &\bullet 
\enddiagram
\end{equation*}
where there are $k$ pairs of vertices on the left hand side and $l=n-2k$ vertices on the right hand side. 
We shall refer to this graph as `the canonical graph representing $M_{2^k}(\bbC)\otimes \bbC^{2^l}$.'

The proof is by induction on $n$. If $n=1$ or $n=2$ 
 then the assertion is vacuous. 
 We shall first prove the case $n=3$ both as a warmup and because  it will be used in the inductive step. 
 We shall  prove that each graph $G$ on three vertices is isomorphic either 
 to the null graph or to the graph with a single edge. By using Lemma~\ref{L.switch} we have
 the following. 
   \begin{equation*}
\diagram 
\bullet^x \xline[d]\xline[dr] \\
\bullet^y \xline[r] & \bullet^z
\enddiagram
\quad\sim\quad 
\diagram 
\bullet^x \xline[d] \\
\bullet^y \xline[r] & \bullet^{iyz}
\enddiagram
\quad\sim\quad
\diagram 
\bullet^x \xline[d] \\
\bullet^y & \bullet^{ixyz}
\enddiagram
\end{equation*}
Since $G_1$, $G_2$ and $G_3$, together with the null graph, are all graphs with three vertices, 
this concludes the proof of the case $n=3$. 

Assume the assertion is true for $n$ and fix $G$ such that $|V|=n+1$. 
Applying the inductive hypothesis, we may assume that the induced graph  of $G$ to the first
$n$ vertices is the canonical graph representing $M_{2^k}(\bbC)\otimes \bbC^{2^{2l}}$ 
for some $k$ and $l$. 
Then $G$ is of the form 
\begin{equation*} 
\diagram
\bullet\xline[d]\xdashed[ddrrrr] & \bullet\xline[d]\xdashed[ddrrr] & \dots & \bullet\xline[d]\xdashed[ddr] &  &  &  &  & \\
\bullet\xdashed[drrrr]& \bullet\xdashed[drrr] & \dots & \bullet\xdashed[dr] 
& \bullet\xdashed[d] & \bullet\xdashed[dl] & \dots &\bullet\xdashed[dlll] \\
&&& &   \bullet^x
\enddiagram
\end{equation*}
By the case $n=3$ treated above, each of the triangles on the left hand side of the graph 
can be turned into a graph with exactly one edge (with this edge being the
one not incident with $x$), by multiplying~$x$ with some of the 
other generators and (if necessary) $i$. It therefore remains to check that every graph of the form 
\begin{equation*}
\diagram
\bullet^{y_1}\xline[d] & \bullet^{y_2}\xline[dl] & \dots & \bullet^{y_p} \xline[dlll]\\
\bullet^x
\enddiagram
\end{equation*}
is equivalent to a graph with exactly one edge. This is obtained by replacing $y_j$, for $j\geq 2$, 
with $y_1 y_j$ and using Lemma~\ref{L.switch}. 
\end{proof}

Lemma~\ref{L.graphs} implies there are exactly $1+\lfloor n/2\rfloor$ nonisomorphic algebras
of the form $B(G)$, where $G$ is a graph with $n$ vertices. For example, in the case $n=4$
the algebras are $\bbC^{16}$ (corresponding to the null graph), $M_2(\bbC)\otimes \bbC^{4}$, 
corresponding to any of the graphs
\spreaddiagramcolumns{-1pc}
\spreaddiagramrows{-1pc}
\begin{equation*}
\grp dr{}d{}r
\grp dr{}{dl}dr 
\grp d{}{}{dl}{}r 
\grp d{dr}{}{}{}r 
\grp d{}{}{}{}{}
\grp dr{}{}{}{}
\end{equation*}
and $M_4(\bbC)$, corresponding to any of the graphs
\begin{equation*}
\grp d{}{}d{}{}\quad
\grp d{}{}{dl}d{}\quad
\grp d{}{}{dl}dr\quad
\grp dr{dr}{dl}dr.
\end{equation*}
I don't know whether there is a simpler description of the relation $\sim$, even on the finite graphs, 
then the one given by Lemma~\ref{L.iso} below.

For a graph $G=(V,E)$ 
let $G^{<\infty}$ be the graph whose vertices are all finite nonempty subsets of $V$ 
and two such vertices $\bfs$ and $\bft$ are adjacent if and only if the cardinality of the set
\[
\{(i,j)\colon i\in \bfs, j\in \bft, \text{ and } \{i,j\}\in E\}
\]
is an odd number.

\begin{lemma} \label{L.iso} Assume $G$ and $K$ are graphs. 
\begin{enumerate}
\item\label{L.iso.a}  If graphs $G^{<\infty}$ and $K^{<\infty}$ are isomorphic then 
algebras $B(G)$ and $B(K)$ are isomorphic. 
\item\label{L.iso.b}  Assume $G$ and $K$ are finite. Then the following are equivalent
\begin{enumerate}
\item   \label{L.iso.b.1}  $G^{<\infty}$ and $K^{<\infty}$ are isomorphic, 
\item \label{L.iso.b.2} $G$ can be obtained from $K$ by a finite number of applications of Lemma~\ref{L.switch}, 
  \item   \label{L.iso.b.3} $B(G)$ and 
$B(K)$ are isomorphic. 
\end{enumerate}
\end{enumerate}
\end{lemma}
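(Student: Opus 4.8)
The plan is to repackage both sides of the lemma in terms of an alternating $\mathbb{F}_2$-bilinear form attached to a graph. For $G=(V,E)$, I would work in the $\mathbb{F}_2$-vector space $\mathcal W_G$ whose elements are the finite subsets of $V$ with symmetric difference as addition, and let $\beta_G(\mathbf s,\mathbf t)$ be the parity of $|\{(i,j)\colon i\in\mathbf s,\ j\in\mathbf t,\ \{i,j\}\in E\}|$. A routine computation shows that $\beta_G$ is a well-defined alternating bilinear form, that $G^{<\infty}$ is precisely the graph on $\mathcal W_G\setminus\{\emptyset\}$ with $\mathbf s\sim\mathbf t$ iff $\beta_G(\mathbf s,\mathbf t)=1$, and that on singletons $\beta_G$ recovers the adjacency of $G$. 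On the algebra side, writing $u_{\mathbf s}=\prod_{x\in\mathbf s}u_x$ and letting $w_{\mathbf s}$ be the self-adjoint unitary among $u_{\mathbf s}$ and $iu_{\mathbf s}$ (as in the proof of Lemma~\ref{L.switch}), the same sign count gives $w_{\mathbf s}^2=1$, $w_{\mathbf s}w_{\mathbf t}=\zeta\,w_{\mathbf s\triangle\mathbf t}$ for some $\zeta\in\bbT$, and that $w_{\mathbf s}$ and $w_{\mathbf t}$ commute exactly when $\beta_G(\mathbf s,\mathbf t)=0$. The lemma then reduces to two claims: \textbf{(a)} if there is an $\mathbb{F}_2$-linear bijection $\Phi\colon\mathcal W_G\to\mathcal W_K$ with $\beta_K(\Phi\cdot,\Phi\cdot)=\beta_G(\cdot,\cdot)$ then $B(G)\cong B(K)$; and \textbf{(b)} if $G^{<\infty}\cong K^{<\infty}$ as graphs then such a $\Phi$ exists. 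Granting (a) and (b), part~\eqref{L.iso.a} is immediate, and part~\eqref{L.iso.b} follows by combining them with Lemmas~\ref{L.switch} and~\ref{L.graphs} as indicated below.

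To prove \textbf{(a)} I would argue as follows. The unitaries $w_{\Phi(\{x\})}\in B(K)$ are self-adjoint of square $1$, and $w_{\Phi(\{x\})}$ commutes with $w_{\Phi(\{y\})}$ iff $\beta_K(\Phi(\{x\}),\Phi(\{y\}))=0$ iff $\beta_G(\{x\},\{y\})=0$ iff $x,y$ are not adjacent in $G$; so the universal property of $B(G)$ yields a $*$-homomorphism $\rho\colon B(G)\to B(K)$ with $\rho(u_x)=w_{\Phi(\{x\})}$, and symmetrically $\sigma\colon B(K)\to B(G)$ with $\sigma(u_y)=w_{\Phi^{-1}(\{y\})}$. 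Applying $\sigma$ to $\rho(u_x)$, expanding $w_{\Phi(\{x\})}$ as a scalar times a product of generators, using $w_{\mathbf a}w_{\mathbf b}=\zeta\,w_{\mathbf a\triangle\mathbf b}$ repeatedly, and using that $\Phi^{-1}$ is additive, one gets $\sigma\rho(u_x)=\zeta\,w_{\Phi^{-1}(\Phi(\{x\}))}=\zeta\,u_x$ for some $\zeta\in\bbT$; since $u_x$ is self-adjoint this forces $\sigma\rho(u_x)=\pm u_x$. Any assignment $u_x\mapsto\pm u_x$ extends to a $*$-automorphism of $B(G)$, so $\sigma\rho\in\Aut(B(G))$ and likewise $\rho\sigma\in\Aut(B(K))$, whence $\rho$ is a $*$-isomorphism. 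No cardinality hypothesis enters, which is why part~\eqref{L.iso.a} holds for arbitrary graphs.

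To prove \textbf{(b)} I would extract complete invariants of $(\mathcal W_G,\beta_G)$ from the bare graph $G^{<\infty}$, since a graph isomorphism need not be linear. An alternating $\mathbb{F}_2$-space splits orthogonally as its radical $R(\beta)=\{v\colon\beta(v,\cdot)\equiv 0\}$, carrying the zero form, plus a non-degenerate part which is an orthogonal sum of hyperbolic planes (by a Zorn's-lemma argument in the infinite-dimensional case); hence $(\mathcal W_G,\beta_G)$ is determined up to isomorphism by the two cardinals $\dim R(\beta_G)$ and $\dim\bigl(\mathcal W_G/R(\beta_G)\bigr)$. Both are visible in $G^{<\infty}$: a vertex is isolated iff it lies in $R(\beta_G)$ (here one uses that $\beta_G$ is alternating, so no vertex is adjacent to itself), so the isolated vertices number $|R(\beta_G)|-1$; and two non-isolated vertices have the same open neighbourhood iff their symmetric difference lies in $R(\beta_G)$ (for the nontrivial implication one uses non-degeneracy of the induced form on $\mathcal W_G/R(\beta_G)$, via a small case analysis around the diagonal), so the number of such ``twin classes'' is $|\mathcal W_G/R(\beta_G)|-1$. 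A graph isomorphism $G^{<\infty}\cong K^{<\infty}$ preserves the cardinality of the isolated vertices and of the twin classes, hence $|R(\beta_G)|=|R(\beta_K)|$ and $|\mathcal W_G/R(\beta_G)|=|\mathcal W_K/R(\beta_K)|$; for $\mathbb{F}_2$-spaces this forces equality of the two dimensions, so the forms are isomorphic and (b) follows. Verifying that the two graph-theoretic counts really are $|R(\beta)|$ and $|\mathcal W/R(\beta)|$, together with the structure theory of alternating forms in arbitrary dimension, is the step I expect to be the main obstacle.

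Finally, for part~\eqref{L.iso.b} with $G,K$ finite: \eqref{L.iso.b.1}$\Rightarrow$\eqref{L.iso.b.3} is the instance of part~\eqref{L.iso.a} just established. For \eqref{L.iso.b.3}$\Rightarrow$\eqref{L.iso.b.2}, Lemma~\ref{L.graphs} (with its proof) shows that each of $G$ and $K$ is obtained, by finitely many applications of Lemma~\ref{L.switch}, from the canonical graph representing $B(G)\cong M_{2^k}(\bbC)\otimes\bbC^{2^l}$ respectively $B(K)$; since the pair $(k,l)$ is an invariant of the algebra (for instance $2^l$ is the dimension of its centre), $B(G)\cong B(K)$ forces the two canonical graphs to coincide, and since a switch is undone by a switch (proof of Lemma~\ref{L.switch}) we obtain \eqref{L.iso.b.2}. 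For \eqref{L.iso.b.2}$\Rightarrow$\eqref{L.iso.b.1} it suffices to handle a single switch $G\mapsto G'=G-x+\mathbf s$: the assignment $\{v\}\mapsto\{v\}$ for $v\ne x$ and $\{x\}\mapsto\{\mathbf s\}\cup(\mathbf s\setminus\{x\})$ extends to an $\mathbb{F}_2$-linear bijection $\mathcal W_G\to\mathcal W_{G'}$, and a one-line bilinearity check against the definition of adjacency in $G-x+\mathbf s$ shows it carries $\beta_G$ onto $\beta_{G'}$, hence restricts to a graph isomorphism $G^{<\infty}\cong(G')^{<\infty}$.
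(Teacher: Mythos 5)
Your proposal is correct, but it takes a genuinely different---and considerably more explicit---route than the paper's for the key implication. The paper disposes of part \eqref{L.iso.a} in one sentence: the linear span of the $w_{\bfs}$ is a dense *-subalgebra whose structure is read off from $G^{<\infty}$, so isomorphic graphs give isomorphic dense *-algebras and hence isomorphic C*-algebras. That argument passes silently over exactly the point you isolate: a bare graph isomorphism of $G^{<\infty}$ need not respect the symmetric-difference group structure, and the multiplication table $w_{\bfs}w_{\bft}=\lambda w_{\bfs\Delta\bft}$ involves both that structure and scalars, neither of which is visible in the graph alone. Your two-step repair---first recovering the isomorphism type of the alternating form $(\mathcal W_G,\beta_G)$ from the bare graph via the cardinalities of the set of isolated vertices and of the twin classes, then converting a linear isometry of forms into a *-isomorphism by the universal property together with the observation that $\sigma\rho$ multiplies each generator by $\pm1$---is sound and supplies the detail the paper omits; the classification of alternating $\mathbb{F}_2$-forms in arbitrary dimension that you flag as the main obstacle is standard (split off the radical, then apply Zorn's lemma to a maximal orthogonal family of hyperbolic planes in the nondegenerate part, and note that for $\mathbb{F}_2$-spaces equal cardinality forces equal dimension). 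For part \eqref{L.iso.b} your argument coincides with the paper's: \eqref{L.iso.b.2}$\Rightarrow$\eqref{L.iso.b.1}$\Rightarrow$\eqref{L.iso.b.3} are the easy implications (your explicit linear map for a single switch is a correct elaboration of the paper's ``it is clear''), and \eqref{L.iso.b.3}$\Rightarrow$\eqref{L.iso.b.2} goes through Lemma~\ref{L.graphs}, the canonical forms, and the fact that $(k,l)$ is an isomorphism invariant of $M_{2^k}(\bbC)\otimes\bbC^{2^l}$. The cost of your approach is length; what it buys is a complete proof of part \eqref{L.iso.a}, including for infinite graphs, where the paper's one-line argument is hardest to make precise.
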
 

\begin{proof} 
\eqref{L.iso.a} Consider the algebra $B(G)$.  
The linear span of unitaries of the form $w_{\bfs}$ (as defined in the proof of Lemma~\ref{L.switch}) 
is dense in $A(G)$. 
If  $G^{<\infty}$ is isomorphic to $K^{<\infty}$ then $A(G)$ and $A(K)$ have 
isomorphic---and therefore isometric---dense *-algebras and  are, therefore,  isomorphic.

It is clear that \eqref{L.iso.b.2} implies \eqref{L.iso.b.1} and 
\eqref{L.iso.b.1} implies \eqref{L.iso.b.3} by part \eqref{L.iso.a}. 

For the remaining implication we need to assume $G$ and $K$ are finite. 
Assume \eqref{L.iso.b.3}.   
 Lemma~\ref{L.graphs} implies that  by a finite number of applications
of Lemma~\ref{L.switch} graph $G$ 
can be turned into the canonical graph representing 
 $M_{2^k}(\bbC)\otimes \bbC^{2^l}$
for some $k$ and $l$. 
Similarly,  by a finite number of applications
of Lemma~\ref{L.switch} graph $K$ can be turned into the canonical graph 
representing  $M_{2^{k'}}(\bbC)\otimes\bbC^{2^{l'}}$ for 
some $k'$ and $l'$. If these algebras are isomorphic then $k=k'$ and $l=l'$, 
and \eqref{L.iso.b.2} follows by transitivity. 
\end{proof} 

Although the equivalence of \eqref{L.c.3} and \eqref{L.c.4} of the following lemma 
is a version of Lemma~\ref{L.directed} in a wider context, the latter is not an immediate
consequence of the former.

\begin{lemma} \label{L.characterization}
For an infinite graph $G=(V,E)$ the following are equivalent. 
\begin{enumerate}
\item\label{L.c.1} 
 The family of finite induced subgraphs $G_0$ of $G$ such that $B(G_0)$ is isomorphic to 
a full matrix algebra is cofinal in all finite induced subgraphs of $G$. 
\item\label{L.c.2}  $B(G)$ is AM. 
\item\label{L.c.3}  $B(G)$ is simple and has a unique trace. 
\item\label{L.c.4}  For all finite nonempty $\bfs\subseteq V$ there is $v\in V$ such that 
$|\{u\in \bfs: u$ is adjacent to $v\}|$ is odd. 
\pushcounter
\end{enumerate}
\end{lemma}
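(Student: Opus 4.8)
The plan is to prove the cycle of implications $\eqref{L.c.4}\Rightarrow\eqref{L.c.1}\Rightarrow\eqref{L.c.2}\Rightarrow\eqref{L.c.3}\Rightarrow\eqref{L.c.4}$, relying on the machinery already developed. The implications $\eqref{L.c.1}\Rightarrow\eqref{L.c.2}$ and $\eqref{L.c.2}\Rightarrow\eqref{L.c.3}$ are essentially free: the first because $B(G)$ is the direct limit of $B(G_0)$ over finite induced subgraphs $G_0$, so cofinally many of them being full matrix algebras exhibits $B(G)$ as a direct limit of full matrix algebras (AM); the second is the cited fact from \cite{FaKa:Nonseparable} that every AM algebra is simple, nuclear, unital and monotracial.

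For $\eqref{L.c.3}\Rightarrow\eqref{L.c.4}$ I would argue by contraposition. If $\eqref{L.c.4}$ fails, fix a finite nonempty $\bfs\subseteq V$ such that for every $v\in V$ the set of vertices in $\bfs$ adjacent to $v$ has even cardinality. Form the self-adjoint unitary $w_{\bfs}$ (one of $u_{\bfs}$ or $iu_{\bfs}$, as in the proof of Lemma~\ref{L.switch}). The parity hypothesis says precisely that $w_{\bfs}$ commutes with every generator $u_v$, hence $w_{\bfs}$ is a nontrivial central self-adjoint unitary. If $w_{\bfs}\neq 1$ this contradicts simplicity, and if $w_{\bfs}$ commutes with everything but the trace argument is cleaner: a central self-adjoint unitary $w\neq\pm 1$ gives a nontrivial central projection $(1+w)/2$, contradicting simplicity; the case $w_\bfs=\pm 1$ is excluded because the span argument ($B(G)$ is spanned by the $v_{\bfs}$, which are linearly independent in the universal algebra) shows $w_{\bfs}\notin\bbC 1$ whenever $\bfs\neq\emptyset$. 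So simplicity alone already forces $\eqref{L.c.4}$.

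The substantial implication is $\eqref{L.c.4}\Rightarrow\eqref{L.c.1}$, which is the analogue of the cofinality computation in Lemma~\ref{L.directed} but now without independence or density hypotheses on a bipartite structure — we have an arbitrary graph satisfying only the odd-neighbor condition. The idea is: given a finite induced subgraph on vertex set $F\subseteq V$, I want to enlarge $F$ (and then apply Lemma~\ref{L.switch}-type moves, i.e.\ pass to a graph of the form $G-x+\bfs$ inside the finite algebra) to reach one of the canonical graphs representing a full matrix algebra. First use Lemma~\ref{L.graphs} to put the induced graph on $F$ into canonical form $M_{2^k}(\bbC)\otimes\bbC^{2^l}$; it then suffices to "kill" the $\bbC^{2^l}$ factor, i.e.\ to find, for each of the $l$ isolated-pair-free leftover vertices $y$, a vertex $v\in V$ (outside the current finite set) and a finite subset $\bfs$ so that after the switch $y$ becomes half of a new commuting/anticommuting pair — exactly the situation of Lemma~\ref{L.M-n}\eqref{L.M-n.3}. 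Condition $\eqref{L.c.4}$, applied repeatedly to singletons and to the relevant finite sets $\bfs$, supplies the needed witnesses $v$, and one checks that finitely many such enlargements produce a finite induced subgraph $G_0\supseteq$ (the original) $F$ with $B(G_0)$ a full matrix algebra. The main obstacle is bookkeeping: each application of $\eqref{L.c.4}$ adds a vertex, which may create new "bad" leftover vertices or new adjacencies among the already-chosen ones, so one has to organize the process (e.g.\ handle the leftover vertices one at a time, each time choosing the new witness vertex disjoint in its neighborhood from the finite set built so far, which $\eqref{L.c.4}$ permits since it can be applied to any finite nonempty $\bfs$, in particular to $\{y\}$ and to singletons) and verify it terminates with all the defining relations of a canonical full-matrix graph in place.
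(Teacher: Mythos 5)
Your cycle of implications is the same as the paper's, and three of the four legs are fine: \eqref{L.c.1}$\Rightarrow$\eqref{L.c.2} by the direct-limit observation, \eqref{L.c.2}$\Rightarrow$\eqref{L.c.3} by the cited facts about AM algebras, and \eqref{L.c.3}$\Rightarrow$\eqref{L.c.4} by contraposition via the central self-adjoint unitary $w_{\bfs}$ (your extra care in ruling out $w_{\bfs}=\pm 1$ by linear independence of the monomials is correct and in fact more explicit than the paper).

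The gap is in \eqref{L.c.4}$\Rightarrow$\eqref{L.c.1}, exactly where you flag ``bookkeeping'' and then propose to resolve it by choosing each new witness vertex ``disjoint in its neighborhood from the finite set built so far, which \eqref{L.c.4} permits.'' It does not: condition \eqref{L.c.4} only produces \emph{some} $v$ with odd edge-count against the prescribed $\bfs$; it gives no control over $v$'s adjacencies to the other vertices already collected, and there need not exist a witness avoiding them. The correct way to close the loop (and the paper's) is not to control the witness but to absorb its unwanted adjacencies afterwards: put the current finite induced subgraph into canonical form via Lemmas~\ref{L.graphs} and~\ref{L.switch}, so each unmatched ``vertex'' is a product $w_{\bfs}$ with $\bfs\subseteq V$ finite; apply \eqref{L.c.4} to such an $\bfs$ to get $y\in V$ with odd edge-count against $\bfs$ --- note $y$ cannot already lie in the finite set, since $w_{\bfs}$ being unmatched means every vertex already present has \emph{even} edge-count against $\bfs$; then re-run the canonicalization of Lemma~\ref{L.graphs} on the enlarged induced subgraph. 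The switches of that lemma dispose of all of $y$'s uncontrolled adjacencies, and the one guaranteed odd adjacency forces $y$ to pair up with an unmatched vertex, so the parameters move from $(k,l)$ to $(k+1,l-1)$; after $l$ steps one reaches a finite induced subgraph generating a full matrix algebra. One also needs the small observation that the hypothesis survives the switches, i.e.\ that \eqref{L.c.4} yields the equivalent statement for finite sets $\bft$ in place of single vertices $v$ (if a sum of integers is odd, one summand is odd), so that witnesses against product-vertices are still available at every stage. With this replacement for your neighborhood-control step, the argument is complete.
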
 

\begin{proof} The implication from  \eqref{L.c.1} to \eqref{L.c.2} is immediate and 
\eqref{L.c.2} implies \eqref{L.c.3} was proved in \cite{FaKa:Nonseparable}. 
Assume \eqref{L.c.4} fails for some $\bfs$. 
Then the unitary $w_{\bfs}$ (as defined in the proof of Lemma~\ref{L.switch}) 
commutes with every generating unitary $u_x$ of 
 $B(G)$ and therefore belongs to its center, hence
 \eqref{L.c.3} fails.

Now assume \eqref{L.c.4}. We first prove that it is equivalent to 
\begin{enumerate}
\popcounter
\item \label{L.c.5} For all finite nonempty $\bfs\subseteq V$ there is a finite 
$\bft\subseteq V$ such that 
$|\{(u,v)\in \bfs\times \bft: u$ is adjacent to $v\}|$ is odd.
\end{enumerate}
Clearly \eqref{L.c.4} implies \eqref{L.c.5}. The reverse implication holds because if a sum of 
integers is odd then at least one of them has to be odd.

In order to prove \eqref{L.c.1} fix a finite 
induced subgraph $G_0$ of $G$. 
By Lemma~\ref{L.graphs} $G_0$ is $\sim$-equivalent to the canonical 
graph representing  
$M_{2^k}(\bbC)\otimes \bbC^{2^l}$ for some $k$ and $l$.
By Lemma~\ref{L.iso} \eqref{L.iso.b}, we can assume $G_0$ is equal to 
the latter graph (note that the condition \eqref{L.c.5} is invariant under  this change). 

If $l=0$ there is nothing to prove. Otherwise let $x$ be one of the $l$ unmatched vertices
in $G_0$. By \eqref{L.c.4}  pick $y$ in $G$ adjacent to $x$. 
Note that $y$ is not a vertex of $G_0$. 
The  construction of Lemma~\ref{L.graphs} shows that the induced
subgraph of $G$ on $V(G_0)\cup \{y\}$ is equivalent to the canonical 
graph representing 
$M_{2^{k+1}}(\bbC)\otimes \bbC^{2^{l-1}}$.

Repeating this construction $l-1$ more times we find an induced 
 subgraph $G_1$ of $G$ including $G_0$ such that $B(G)$ is isomorphic to 
 $M_{2^{k+l}}(\bbC)$ and \eqref{L.c.1} follows. 
 \end{proof} 

Lemma~\ref{L.characterization}  implies $B(G)$ is 
isomorphic to the CAR algebra $M_{2^\infty}$ for the generic countable graph $G$. 
This is not surprising since the generic countable graph is isomorphic to the Rado graph, also 
known as the random graph. 
Lemma~\ref{L.characterization}  also 
implies that the algebras of the form $B(G)$ do not give new examples of separable
C*-algebras. 

\begin{corollary} If $G$ is a countably infinite
 graph then $B(G)$ is isomorphic to an algebra of the form 
 $M_{2^m}(\bbC)\otimes \bbC^{2^n}$ for $m$ and $n$ in $ \bbN\cup \infty$, 
 where $\bbC^{2^\infty}$ is defined to be $C(2^{\bbN})$, the algebra of continuous functions on the 
 Cantor space.  
 \qed
 \end{corollary}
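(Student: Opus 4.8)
The plan is to deduce this corollary directly from Lemma~\ref{L.characterization} together with Lemma~\ref{L.graphs}, by splitting into cases according to how much of the ``matrix part'' of $B(G)$ is finite. First I would note that every finite induced subgraph $G_0$ of $G$ is, by Lemma~\ref{L.graphs} and Lemma~\ref{L.iso}\eqref{L.iso.b}, equivalent to a canonical graph representing $M_{2^{k(G_0)}}(\bbC)\otimes \bbC^{2^{l(G_0)}}$ with $k(G_0)+l(G_0)/2$ determined by $|G_0|$; the point is to understand how $k(G_0)$ behaves as $G_0$ grows. The construction in the inductive step of Lemma~\ref{L.graphs}, together with the argument in the proof of Lemma~\ref{L.characterization}, shows that $k(G_0)$ is monotone: enlarging $G_0$ never decreases the matrix rank, since an unmatched vertex of $G_0$ that acquires a neighbour in a larger subgraph gets absorbed into a new matrix pair. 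So $m:=\sup_{G_0} k(G_0)\in \bbN\cup\{\infty\}$ is well-defined.

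Next I would treat the case $m=\infty$. Here, for every finite induced subgraph $G_0$ there is a larger finite induced subgraph $G_1$ whose $B(G_1)$ is a full matrix algebra (by the same absorption argument: keep adding neighbours of unmatched vertices, which exist by monotonicity forcing $l$ eventually irrelevant, and in fact one can invoke condition~\eqref{L.c.4} of Lemma~\ref{L.characterization}). Thus the family of \eqref{L.c.1} is cofinal and $B(G)$ is AM by Lemma~\ref{L.characterization}; since it is a countable direct limit of full matrix algebras with unital connecting maps and unbounded matrix size, and one checks the connecting maps are (up to conjugacy) the standard CAR-type embeddings, $B(G)\cong M_{2^\infty}$, which is our target algebra with $m=\infty$ and $n=0$.

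For the case $m<\infty$, I would fix a finite $G_0$ achieving $k(G_0)=m$ and argue that the matrix part is already ``used up.'' Concretely, $B(G)$ is the direct limit over finite induced subgraphs containing $G_0$, and each such $B(G_1)\cong M_{2^m}(\bbC)\otimes\bbC^{2^{l(G_1)}}$ with $l(G_1)\to n$ for some $n\in\bbN\cup\{\infty\}$. Factoring out the fixed $M_{2^m}(\bbC)$ (which embeds compatibly into every stage, since the canonical graph splits as a disjoint union of $m$ matched pairs and the remaining vertices), the direct limit of the commutative parts $\bbC^{2^{l(G_1)}}$ is either $\bbC^{2^n}$ for finite $n$ or $C(2^{\bbN})$ when $n=\infty$ — the latter because the Boolean algebra of clopen subsets generated by the $w_{\bfs}$ for $\bfs$ among the unmatched vertices is atomless in the limit, hence its Stone space is the Cantor set. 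Therefore $B(G)\cong M_{2^m}(\bbC)\otimes \bbC^{2^n}$ with the stated convention.

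The main obstacle is the bookkeeping in the case $m<\infty$: one must verify that the copy of $M_{2^m}(\bbC)$ stabilizes compatibly with the direct limit structure (so that it genuinely factors out), and that no further matrix pairs are ever created beyond stage $G_0$. This is exactly where monotonicity of $k(G_0)$ must be established cleanly — it follows from the construction in Lemma~\ref{L.graphs}, but writing it so that the embeddings $B(G_0)\hookrightarrow B(G_1)$ respect the tensor splitting requires care. Once that is in hand, identifying the commutative direct limit with $C(2^{\bbN})$ in the $n=\infty$ subcase is routine Stone duality, and the $n<\infty$ subcase is immediate.
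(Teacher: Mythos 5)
Your proof breaks down in the case $m=\infty$. You assert that if $m:=\sup_{G_0}k(G_0)=\infty$ then the finite induced subgraphs $G_1$ with $B(G_1)$ a full matrix algebra are cofinal, hence $B(G)\cong M_{2^\infty}$ with $n=0$. This is false: take $G$ to be the disjoint union of a countably infinite matching and a countably infinite set of isolated vertices. Every finite induced subgraph containing $k$ matched pairs has $k(G_0)=k$, so $m=\infty$; but any finite induced subgraph containing an isolated vertex of $G$ has $l(G_1)\geq 1$ for every larger finite induced subgraph $G_1$, so condition \eqref{L.c.1} of Lemma~\ref{L.characterization} fails, $B(G)$ has nontrivial center (each isolated vertex gives a central unitary), and in fact $B(G)\cong M_{2^\infty}\otimes C(2^{\bbN})$. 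The step ``one can invoke condition~\eqref{L.c.4} of Lemma~\ref{L.characterization}'' is exactly the gap: $\sup k(G_0)=\infty$ does not imply \eqref{L.c.4}, since \eqref{L.c.4} is a statement about \emph{all} finite $\bfs$ (in particular singleton isolated vertices), not just about the existence of large matchings. The two parameters $m$ and $n$ are independent, and all four combinations of finite/infinite occur; your dichotomy on $m$ alone conflates ``the matrix part is infinite'' with ``the commutative part is trivial.''

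The correct organization is to treat the center separately from the matrix part in both cases. The center of $B(G)$ is the group C*-algebra of the Boolean group $\Gamma_0$ of those finite $\bfs\subseteq V$ for which every vertex of $G$ is adjacent to an even number of elements of $\bfs$ (these are precisely the $w_{\bfs}$ commuting with all generators, as in the proof of \eqref{L.c.4}$\Rightarrow$\eqref{L.c.3} in Lemma~\ref{L.characterization}); being the group algebra of a Boolean group it is $\bbC^{2^n}$ for finite $n$ or $C(2^{\bbN})$, which is what forces the commutative tensor factor to have the stated homogeneous form. One then splits off a complement of $\Gamma_0$ on which the form is nondegenerate (equivalently, chooses the exhausting sequence of finite induced subgraphs so that the decompositions $M_{2^{k_j}}\otimes\bbC^{2^{l_j}}$ cohere with a fixed increasing matrix part and increasing central part). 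Your $m<\infty$ case is essentially this argument and is repairable as you indicate --- uniqueness up to unitary equivalence of unital copies of $M_{2^m}$ in $M_{2^m}\otimes\bbC^{2^{l}}$ gives the compatible splitting, and the atomlessness argument for the limit of the centers is fine --- but the same splitting must also be carried out when $m=\infty$, which your write-up never does.
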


The  algebras of the form $B(G)$ associated  with  uncountable graphs have other interesting 
properties. 
For example, 
it is not difficult to show that under the assumptions of Lemma~\ref{L.directed} and using the notation 
from its proof
the masas generated by $\{u_i: i\in Y\}$ and $\{v_x: x\in \bbA\}$
have the extension property. (Recall that a \emph{masa} in a C*-algebra is its  
maximal abelian C*-subalgebra and that it has the \emph{extension
property} if each of its pure states has the unique extension to a state of the algebra.)
This assertion is an immediate consequence of the following lemma. 

\begin{lemma} \label{L.uv} 
Assume $A$ is a C*-algebra and  $\phi$ is a state on $A$. 
Also assume $u$ is a  self-adjoint unitary in $A$ and $b\in A$ is  such that $ub=-bu$. 
If $\phi(u)=1$ or $\phi(u)=-1$  then $\phi(b)=0$. 
\end{lemma}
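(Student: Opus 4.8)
The plan is to use the Cauchy–Schwarz inequality for the GNS sesquilinear form associated with $\phi$, exploiting the fact that $u$ being a self-adjoint unitary with $\phi(u)=\pm1$ forces $u$ to act trivially on the GNS vector. First I would recall that for any state $\phi$ on $A$ the formula $\langle a,b\rangle_\phi = \phi(b^*a)$ defines a positive semidefinite sesquilinear form, so $|\phi(b^*a)|^2 \le \phi(a^*a)\phi(b^*b)$ for all $a,b\in A$. Taking the GNS representation $(\pi_\phi, H_\phi, \xi_\phi)$ with $\phi(a) = \langle \pi_\phi(a)\xi_\phi,\xi_\phi\rangle$, the hypothesis $\phi(u)=\pm1$ together with $\|\pi_\phi(u)\| = 1$ and $\|\xi_\phi\|=1$ gives $\langle \pi_\phi(u)\xi_\phi,\xi_\phi\rangle = \pm 1 = \pm\|\pi_\phi(u)\xi_\phi\|\|\xi_\phi\|$, the equality case of Cauchy–Schwarz, whence $\pi_\phi(u)\xi_\phi = \pm\xi_\phi$.

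Next I would feed the anticommutation relation into this. Since $ub = -bu$, applying $\pi_\phi$ and then both sides to $\xi_\phi$ gives $\pi_\phi(u)\pi_\phi(b)\xi_\phi = -\pi_\phi(b)\pi_\phi(u)\xi_\phi = \mp\pi_\phi(b)\xi_\phi$; that is, $\pi_\phi(b)\xi_\phi$ is an eigenvector of the self-adjoint unitary $\pi_\phi(u)$ for the eigenvalue $\mp 1$, which is orthogonal to the $\pm 1$-eigenspace containing $\xi_\phi$. Therefore $\phi(b) = \langle \pi_\phi(b)\xi_\phi,\xi_\phi\rangle = 0$, which is the claim.

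Alternatively, and perhaps more cleanly for the write-up, one can avoid the GNS picture and argue directly: from $\phi(u)=1$ (the case $\phi(u)=-1$ being symmetric, or reducible to it by replacing $u$ with $-u$), the element $1-u$ is a positive self-adjoint element with $\phi(1-u)=0$, so by Cauchy–Schwarz $|\phi((1-u)a)|^2 \le \phi((1-u))\phi(a^*(1-u)a) = 0$ for every $a\in A$, i.e. $\phi(ua) = \phi(a)$ for all $a$, and similarly $\phi(au)=\phi(a)$. Applying this with $a=b$ and then using $ub = -bu$ yields $\phi(b) = \phi(ub) = \phi(-bu) = -\phi(bu) = -\phi(b)$, so $2\phi(b)=0$ and $\phi(b)=0$.

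I do not anticipate a serious obstacle here; the only point requiring a little care is the justification that $\phi(u)=\pm1$ makes $u$ "invariant" under $\phi$ in the sense $\phi(ua)=\phi(au)=\phi(a)$ — this is exactly the two-sided Cauchy–Schwarz estimate applied to the positive element $1\mp u$, and I would state it as a one-line lemma or inline it. Everything else is the single algebraic manipulation with the anticommutation relation.
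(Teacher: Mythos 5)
Your proposal is correct and follows essentially the same route as the paper: the paper forms the spectral projection $p=(1+u)/2$, notes $\phi(p)=1$, computes $pbp=0$, and invokes Cauchy--Schwarz to get $\phi(b)=\phi(pbp)=0$, which is exactly your second argument in slightly different clothing (your invariance identity $\phi(ua)=\phi(au)=\phi(a)$ is the same Cauchy--Schwarz consequence of $\phi(1-u)=0$, and the sign flip $\phi(b)=-\phi(b)$ replaces the computation $pbp=0$). Your GNS variant is a harmless repackaging of the same idea.
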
 

\begin{proof}  Assume for a moment that  $\phi(u)=1$. 
The projection $p=(1+u)/2$ satisfies $\phi(p)=1$ and 
$pbu=-pub=-p(2p-1)b=-pb$ hence 
$pb(u+1)=0$ and $pbp=0$. But  the Cauchy--Schwartz inequality for $\phi$
easily implies  $\phi(b)=\phi(pbp)=0$  (see 
e.g.,  \cite[Lemma~3.5]{FaWo:Set}). 
The case when $\phi(u)=-1$ is analogous. 
\end{proof}

\section{Concluding remarks} 

It would be interesting to investigate  algebras $B(G)$ associated with 
uncountable graphs with strong partition properties (see~\cite{Moo:Solution}).

For every $n$ the algebra $M_n(\bbC)$ is the universal algebra generated by unitaries $u$ and $v$
such that $u^n=v^n=1$ and $uv=\gamma vu$, where $\gamma$ is a primitive $n$-th root of unity. 
Using this observation one can generalize algebras $B(G)$ by associating an AF algebra 
to a  digraph with labelled edges. At present I am not aware of  any applications of these algebras.

A positive answer to the Kishimoto--Ozawa--Sakai question 
would have rather interesting consequences. In
\cite{AkeWe:Consistency} it was proved that if Jensen's diamond
principle holds on $\aleph_1$ then  there is a counterexample to
Naimark's problem. 
If the results of \cite{KiOzSa} and \cite{FuKaKi}
extended to nonseparable nuclear C*-algebras then the argument from
\cite{AkeWe:Consistency} would show that Jensen's diamond principle
on any cardinal implies the existence of a counterexample to
Naimark's problem. It is not known whether positive answer to Naimark's problem is 
consistent with the standard axioms of set theory. 

\begin{question} \label{Q1} Assume $A$ is simple C*-algebra and $\phi$ and $\psi$ are its pure states. 
Is there a C*-algebra $B$ that has $A$ as a subalgebra and such that 
\begin{enumerate}
\item both $\phi$ and $\psi$ have unique state extensions, $\tilde\phi$ and $\tilde\psi$, 
to $B$, 
\item these extensions are equivalent, i.e., there is an automorphism
$\alpha$ of $B$ such that   $\tilde\phi=\tilde\psi\circ \alpha$. 
\end{enumerate}
\end{question} 

If the answer to Question~\ref{Q1} is positive, or if, for example,  for every simple nuclear algebra
$A$ one can find a simple nuclear algebra $B$ satisfying its requirements, then the argument from 
\cite{AkeWe:Consistency} shows that Jensen's diamond principle on any cardinal implies the existence
of a counterexample to Naimark's problem.

The following was suggested by Todor Tsankov. One can clearly ask a number 
of questions along these lines. 

\begin{question} 
Consider the algebra $B(G)$ constructed in 
the proof of Theorem~\ref{T3}. Are all of its irreducible representations on a separable Hilbert space
equivalent? 
\end{question}

Consider $\Gamma=G^{<\infty}$ (see the paragraph before Lemma~\ref{L.iso}) 
with respect to the symmetric 
difference $\Delta$ as a discrete  abelian group. The map 
$b\colon \Gamma^2\to \{-1,1\}$ defined by 
\[
b(\bfs,\bft)=(-1)^{|\{(x,y)\in \bfs\times \bft\colon x\text{ is adjacent to }y\}|}
\]
 satisfies  relations
 $b(\bfs,\bft)=1/b(\bfs,\bft)$
 and
 $b(\bfs,\bft_1) b(\bfs,\bft_2)=b(\bfs,\bft_1\bft_2)$.

 In \cite{Slaw:Factor}, J. Slawny associates a universal C*-algebra to a group $\Gamma$ 
and a function $b\colon \Gamma^2\to {\mathbb T}$ as above. 
For a Boolean group $\Gamma$, 
the CCR algebra associated to the pair 
$\Gamma,b$  
is always isomorphic to a group of the form $B(G)$. (Note that  Slawny considered
only second countable groups,  while in the present paper we consider uncountable discrete
groups.)
Consider $\Gamma$ as a vector space over ${\mathbb F}_2$ and 
fix its basis $V$. Proclaim two elements $x,y$ of $V$ to be adjacent if and only 
if $b(x,y)=-1$.  Then Lemma~\ref{L.iso} shows that $B(G)$ is isomorphic 
to Slawny's algebra. 
Among other things, Slawny proved that 
this algebra is simple if and only if the cocycle $c$ is nontrivial. 
This is related to the implication from \eqref{L.c.4} to \eqref{L.c.3} 
of Lemma~\ref{L.characterization}. 

All algebras of the form $B(G)$ are clearly AF, and 
as proved in Lemma~\ref{L.characterization} every simple algebra of the form 
$B(G)$ is a direct limit of full matrix algebras $M_{2^n}(\bbC)$ for $n\in \bbN$. 
While every separable algebra of this form is UHF, i.e., isomorphic to a tensor product of full matrix algebras,  in \cite{FaKa:Nonseparable} 
the author and T. Katsura 
proved that this may fail for nonseparable C*-algebras. 
For example, if 
 $\cZ$ is the Jiang--Su algebra (\cite{JiSu:On}) then  
  the algebra (below $\aleph_1$ denotes the least uncountable cardinal)
 \[
\textstyle  A_{\aleph_0\aleph_1}:=M_{2^\infty}\otimes \bigotimes_{\aleph_1} \cZ
 \]
  is a direct limit of full matrix algebras but not  UHF 
 (\cite[Proposition~3.2 and Theorem 1.3]{FaKa:Nonseparable}). 
 The algebra constructed in the proof of Theorem~\ref{T3} gives an another 
 example of an AM C*-algebra that is not UHF. This is because a nonseparable
 UHF algebra cannot
 have a representation on a separable Hilbert space (\cite[Proposition~7.6]{FaKa:Nonseparable}). 

\begin{conjecture} \label{C.1} 
The algebra $A_{\aleph_0\aleph_1}$ is not isomorphic to $B(G)$ 
for any graph~$G$. 
\end{conjecture}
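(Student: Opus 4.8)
The plan is to show that $A_{\aleph_0\aleph_1} = M_{2^\infty}\otimes\bigotimes_{\aleph_1}\cZ$ cannot be isomorphic to any $B(G)$ by extracting a structural invariant of the algebras $B(G)$ that $A_{\aleph_0\aleph_1}$ fails to have. The key observation is that every $B(G)$ is spanned (as a Banach space) by its canonical self-adjoint unitaries $w_{\bfs}$, $\bfs\subseteq V$ finite, and any two distinct such unitaries satisfy $\|w_{\bfs}-w_{\bft}\|\geq\sqrt 2$ (this is the content of Lemmas~\ref{L.well-defined} and~\ref{L.sqrt2}, extended from generators to the products $w_{\bfs}$). Moreover these $w_{\bfs}$ form a group under multiplication modulo scalars in $\{\pm 1, \pm i\}$, and the linear span of $\{w_{\bfs}\}$ is a dense *-subalgebra of $B(G)$ that is, abstractly, a twisted group algebra of the Boolean group $[V]^{<\omega}$. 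So the first step is to isolate the property: \emph{$B(G)$ has a dense *-subalgebra which is a linear span of a group of unitaries, this group being a twisted $\mathbb F_2$-vector-space group.} I would phrase this as: $B(G)$ contains a set $\mathcal W$ of unitaries, closed under multiplication up to fourth roots of unity, with $\|w-w'\|\geq\sqrt 2$ for $w\neq w'$ in $\mathcal W$, whose span is dense.

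Next I would argue that $A_{\aleph_0\aleph_1}$ has no such dense subalgebra. The natural route is via traces and the $2$-norm $\|a\|_2 = \tau(a^*a)^{1/2}$ coming from the unique trace $\tau$. In $B(G)$ simple, the $w_{\bfs}$ are pairwise $\|\cdot\|_2$-orthonormal (distinct $w_{\bfs}$ have trace $0$ since $w_{\bfs}w_{\bft}^* = \pm w_{\bfs\Delta\bft}$ is a nontrivial canonical unitary, hence traceless), so $B(G)$ with its trace looks like an $L^2$-space with an orthonormal \emph{group} basis whose GNS closure is the hyperfinite $\mathrm{II}_1$ factor (or a finite type I von Neumann algebra). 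The point to exploit is that the Jiang--Su algebra $\cZ$ is \emph{not} an AF algebra and not stably finite-dimensional in a way compatible with a self-adjoint-unitary basis: concretely, $\cZ$ has projections with non-dyadic (indeed, all of $[0,1]\cap\text{(a dense set)}$) trace values, whereas in any $B(G)$ every projection in the span of finitely many $w_{\bfs}$ lives in a finite-dimensional subalgebra of the form $M_{2^k}(\mathbb C)\otimes\mathbb C^{2^l}$ (Lemma~\ref{L.graphs}), so has trace in $\frac{1}{2^k}\mathbb Z$. Taking closures, the trace on any simple $B(G)$ has range $\mathbb Z[1/2]\cap[0,1]$ on projections (it is a $2^\infty$-UHF-like trace), while $\bigotimes_{\aleph_1}\cZ$ forces trace values outside $\mathbb Z[1/2]$ — for instance a copy of $\cZ$ contains a projection of trace $1/3$. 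Hence $A_{\aleph_0\aleph_1}\not\cong B(G)$ for any $G$.

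The main obstacle, and the step requiring the most care, is making precise the claim that \emph{every} simple $B(G)$ (not just the separable ones, where it is UHF of type $2^\infty$) has its trace taking only dyadic-rational values on projections. For countable $G$ this follows from the Corollary after Lemma~\ref{L.characterization}. For uncountable $G$ one must check that a projection $p\in B(G)$, which is a norm-limit of elements in the span of finitely many $w_{\bfs}$, has its trace approximated by traces of elements of finite-dimensional subalgebras $B(G_0)\cong M_{2^k}(\mathbb C)\otimes\mathbb C^{2^l}$; since $\tau\restriction B(G_0)$ is a trace on that finite-dimensional algebra it has values in $\frac{1}{2^{k}}\mathbb Z$, and a standard perturbation argument (a near-projection in an AF algebra is close to an honest projection in a finite-dimensional subalgebra, with trace value in $\mathbb Z[1/2]$) pins $\tau(p)\in\mathbb Z[1/2]$. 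One should also dispose of the non-simple case: if $B(G)$ is not simple then by Lemma~\ref{L.characterization} it has nontrivial center, hence a nontrivial projection in its center, whereas $A_{\aleph_0\aleph_1}$ is simple, so these cannot be isomorphic either. Assembling these pieces gives the conjecture. \qed
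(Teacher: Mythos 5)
This statement is a \emph{conjecture}; the paper offers no proof of it, and indeed explicitly observes that ``there is no K-theoretic obstruction'' to $A_{\aleph_0\aleph_1}\cong B(G)$. Your argument does not close this gap, because its decisive step is false.

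The fatal error is the claim that $\cZ$ contains a projection of trace $1/3$. The Jiang--Su algebra is \emph{projectionless}---this is the defining feature announced in the very title of \cite{JiSu:On}---so its only projections are $0$ and $1$, and $K_0(\cZ)\cong\bbZ$ with the usual order. Consequently $K_0(A_{\aleph_0\aleph_1})\cong\bbZ[1/2]$, exactly the $K_0$ of the CAR algebra, and the trace of $A_{\aleph_0\aleph_1}$ takes precisely the dyadic rationals in $[0,1]$ as its values on projections. This is forced in any case by the fact, quoted in the paper from \cite{FaKa:Nonseparable}, that $A_{\aleph_0\aleph_1}$ is AM, i.e.\ a direct limit of full matrix algebras (necessarily of the form $M_{2^n}(\bbC)$ given its $K_0$). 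So the invariant you propose---the range of the trace on projections---takes the identical value $\bbZ[1/2]\cap[0,1]$ on $A_{\aleph_0\aleph_1}$ and on every simple infinite $B(G)$, and cannot distinguish them. This is precisely why the statement is left as a conjecture: every classical invariant of AF type (K-theory, traces, projections) agrees for the two algebras, and the known proof that $A_{\aleph_0\aleph_1}$ is not UHF (\cite[Theorem~1.3]{FaKa:Nonseparable}) does not by itself rule out an isomorphism with some $B(G)$, since $B(G)$ for uncountable $G$ need not be UHF either.

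Your opening idea---that $B(G)$ has a dense linear span of a multiplicative group of unitaries that are pairwise at distance at least $\sqrt2$ (a ``twisted group algebra'' structure over the Boolean group $[V]^{<\omega}$)---is a more promising invariant, and is the sort of structural property one would hope $A_{\aleph_0\aleph_1}$ lacks. But you abandon it without proving the hard half, namely that $M_{2^\infty}\otimes\bigotimes_{\aleph_1}\cZ$ admits no such generating group of unitaries; showing this would require genuinely new work (one would have to rule out \emph{any} such system of unitaries, not just the obvious ones), and nothing in the paper supplies it. As it stands, the conjecture remains open and your proposal does not resolve it.
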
 

Clearly, not every AM algebra is isomorphic to an  algebra of the form $B(G)$---take, for example, 
$M_3(\bbC)$. (This is an immediate consequence
of Elliott's classification of AF algebras, see e.g., \cite{Ror:Classification}.)
 However, there is no K-theoretic obstruction to having a graph $G$
such that $A_{\aleph_0\aleph_1}$ is isomorphic to $B(G)$, since the K-theory of all these algebras
coincides with the K-theory of the CAR algebra. Thus a confirmation of Conjecture~\ref{C.1} 
would essentially confirm that the AM algebras that are also  CCR algebras form a  
nontrivial intermediate class between AM algebras and UHF algebras.


Recall that for two graphs $G$ and $K$ 
we write $ G\sim K$ if C*-algebras $B(G)$ and $B(K)$ are isomorphic (see Lemma~\ref{L.iso}). 
The proof of Lemma~\ref{L.graphs} gives an algorithm that associates a natural number $k=k(G)$ 
to every finite graph $G$ such that $G\sim K$ if and only if $|V(G)|=|V(K)|$ and $k(G)=k(K)$. 

\begin{question} 
\label{Q.Complexity} 
What is the computational complexity of the relation $G\sim K$ for finite graphs $G$ and $K$? 
\end{question} 

Shortly after seeing a preliminary version of this paper, 
A. Kishimoto sketched a proof of Theorem~\ref{T3} using crossed products (\cite{Kish:Email}).

A word on precursors of the class of algebras considered here is in order. 
A variant of algebras of the form $B(G(Y,\bbA))$ with uncountable $Y$ 
was  used in~\cite{FaKa:Nonseparable} to answer a question of Jacques Dixmier.
After my presentation of \cite{FaKa:Nonseparable}  at the    COSy in Toronto in  
 May 2008, Bruce Blackadar  suggested what is essentially 
 a variant of $B(\bbN,\bbA)$ with an uncountable $\bbA$. 
  This example was reproduced   in  \cite[\S 7]{FaKa:Nonseparable} to give a partial answer to 
  a question of Masamichi Takesaki. 
 In all algebras used in \cite{FaKa:Nonseparable}  the family $\bbA$ includes all singletons 
of~$Y$.

\subsection{Acknowledgments} 
Some of the results of this note were first presented 
at the Set Theory workshop at the Erwin Schr\"odinger Institute and at the 11th Asian Logic Colloquium
in honour of Chitat Chong's 60th birthday in July 2009. The remaining results 
were proved during these two exciting meetings. I would like to thank the organizers of 
both meetings for inviting me. I would also like to thank George Elliott for pointing out to the connection
with \cite{Slaw:Factor}.  
Last, but not least, I would like to thank Takeshi Katsura 
for teaching me some of the techniques used here.
\providecommand{\bysame}{\leavevmode\hbox to3em{\hrulefill}\thinspace}
\providecommand{\MR}{\relax\ifhmode\unskip\space\fi MR }
\providecommand{\MRhref}[2]{%
  \href{http://www.ams.org/mathscinet-getitem?mr=#1}{#2}
}
\providecommand{\href}[2]{#2}

\end{document}